\documentclass[11pt]{imsart}

\usepackage[hiresbb]{graphicx}
\usepackage{fullpage}
\usepackage{graphicx}
\usepackage{amsmath}
\usepackage{amssymb}
\usepackage[round]{natbib}
\usepackage{color}
\usepackage{url}
\usepackage{amsthm}
\usepackage{latexsym}

\newtheorem{thm}{Theorem}[section]
\newtheorem{cor}{Corollary}[section]
\newtheorem{lem}{Lemma}[section]
\newtheorem{rem}{Remark}[section]
\newtheorem{example}{Example}[section]
\newtheorem{prop}{Proposition}[section]

\providecommand{\keywords}[1]{\textbf{\textit{Keywords:}} #1}

\newcommand{\revise}[1]{\textcolor{black}{#1}}

\DeclareMathOperator*{\argmax}{arg\,max}

\newcommand{\mGamma}[2]{
\raise1.5pt\hbox{$\displaystyle \mathop{\Gamma}^{\rm m}$}\hspace{-1.5pt}_{#1}^{\, #2}}
\newcommand{\eGamma}[2]{
\raise1.5pt\hbox{$\displaystyle \mathop{\Gamma}^{\rm e}$}
\hspace{-1.5pt}_{#1}^{\, #2}}
\newcommand{\aGamma}[2]{
\raise1.5pt\hbox{$\displaystyle \mathop{\Gamma}^{\alpha}$}\hspace{-1.5pt}_{#1}^{\, #2}}
\newcommand{\minusaGamma}[2]{
\raise1.5pt\hbox{$\displaystyle \mathop{\Gamma}^{-\alpha}$}\hspace{-1.5pt}_{#1}^{\, #2}}
\newcommand{\aaGamma}[2]{
\raise1.5pt\hbox{$\displaystyle \mathop{\Gamma}^{\alpha_{0}}$}\hspace{-1.5pt}_{#1}^{\, #2}}
\newcommand{\zeroGamma}[2]{
\raise1.5pt\hbox{$\displaystyle \mathop{\Gamma}^{0}$}\hspace{-1.5pt}_{#1}^{\, #2}}
\newcommand{\halfGamma}[2]{
\raise1.5pt\hbox{$\displaystyle \mathop{\Gamma}^{1/2}$}\hspace{-1.5pt}_{#1}^{\, #2}}
\newcommand{\malpha}[2]{
\raise1.5pt\hbox{$\displaystyle \mathop{\alpha}^{\rm m}$}\hspace{-1.5pt}_{#1}^{\, #2}}
\newcommand{\mH}[2]{
\raise1.5pt\hbox{$\displaystyle \mathop{H}^{\rm m}$}\hspace{-1.5pt}_{#1}^{\, #2}}
\newcommand{\mnabla}[2]{
\raise1.5pt\hbox{$\displaystyle \mathop{\nabla}^{\rm m}$}
\hspace{-1.5pt}_{#1}^{\, #2}}
\newcommand{\ealpha}[2]{
\raise1.5pt\hbox{$\displaystyle \mathop{\alpha}^{\rm e}$}
\hspace{-1.5pt}_{#1}^{\, #2}}
\newcommand{\enabla}[2]{
\raise1.5pt\hbox{$\displaystyle \mathop{\nabla}^{\rm e}$}\hspace{-1.5pt}_{#1}^{\, #2}}

\begin{document}
\begin{frontmatter}
\begin{aug}
\title{Matching prior pairs \\ connecting Maximum A Posteriori estimation \\ and posterior expectation}
\runtitle{Matching prior pairs}
\author[A]{\fnms{Michiko} \snm{Okudo}\ead[label=e1]{okudo@mist.i.u-tokyo.ac.jp}}
\and
\author[B]{\fnms{Keisuke} \snm{Yano}\ead[label=e2]{yano@ism.ac.jp}}
\address[A]{Department of Mathematical Informatics,
Graduate School of Information Science and Technology,
The University of Tokyo
7-3-1 Hongo, Bunkyo-ku, Tokyo, 113-8656, JAPAN.
\printead{e1}
}
\address[B]{The Institute of Statistical Mathematics,  10-3 Midori cho, Tachikawa City, Tokyo, 190-8562, Japan. 
\printead{e2}}
\end{aug}

\begin{abstract}
Bayesian statistics has
two common measures of central tendency of a posterior distribution: posterior means and Maximum A Posteriori (MAP) estimates.
In this paper, we discuss a connection between MAP estimates and posterior means. 
We derive an asymptotic condition for a pair of prior densities under which the posterior mean based on one prior coincides with the MAP estimate based on the other prior.
A sufficient condition for the existence of this prior pair relates to $\alpha$-flatness of the statistical model in information geometry. We also construct a matching prior pair using $\alpha$-parallel priors.
Our result elucidates an interesting connection between regularization in generalized linear regression models and posterior expectation.
\end{abstract}

\keywords{Bayesian inference,
Generalized linear regression, Information geometry, Prior selection}

\end{frontmatter}


\section{Introduction}
\label{sec:intro}

In Bayesian statistics,
two common measures of central tendency of a posterior distribution are posterior mean and Maximum A Posteriori (MAP) estimate.
Posterior mean is the Bayes estimate, an estimate minimizing the expected loss for a squared-error loss function.
This is usually computed by the expectation using Markov chain Monte Carlo (MCMC).
MAP estimate lies in the literature of penalized likelihood estimate or regularized maximum likelihood estimate. This is obtained by the optimization.
Although the computational schemes of two estimates are different, the celebrated Bernstein--von-Mises theorem 
tells that 
in the first-order asymptotic regime of the sample size,
the posterior shape becomes Gaussian with the center equal to the MAP estimate. Thus the posterior mean and the MAP estimate become the same in the asymptotic regime.
Yet, 
practical behaviours of these estimates are quite different (e.g., \citealp{pananos2020}). 
Recent studies (\citealp{gribonval2011,gribonval2013,louchet2013,burger2014}) highlight differences and connections between these estimates in several statistical models. In particular, 
\cite{gribonval2013} reveals that in Gaussian linear inverse problems, 
although the posterior mean and the MAP estimate for the same prior may be different,
every posterior mean based on a prior is also the MAP estimate based on a different prior.

To elucidate a further connection between MAP estimates and posterior means in general statistical models,
this paper derives the asymptotic condition for a pair of priors $(\pi,\widetilde{\pi})$
under which the posterior mean derived from one prior $\pi$ coincides with the MAP estimate based on the other prior $\widetilde{\pi}$.
We call this pair of priors matching prior pair.
From our discovery of matching prior pairs,
we see that
in a generalized linear regression model,
although
the posterior mean based on a Gaussian prior
may be different from the ridge regression (the MAP estimate based on a Gaussian prior), 
the matching prior pair of the Gaussian prior can deliver 
the MAP estimate closer to the posterior mean based on the Gaussian prior in asymptotic regimes.
\revise{Although our main discovery is theoretical, it can also have practical implications, in particular, in the computation of each estimate.}
When there exists a difficulty in optimizing the log posterior density to obtain a MAP estimate for a prior,
we can utilize the posterior mean based on another prior that forms a matching pair with the given prior.
In contrast,
when it is hard to build an MCMC for computing a posterior mean, we can instead evaluate a MAP estimate that matches to the posterior mean.
\revise{This point will be further clarified by numerical experiments in Section \ref{sec:experiments}.}

The existence of a matching prior pair has an information-geometrical flavor \citep{amari1985}.
The information geometry presents a class of $\alpha$-connections $(\alpha\in\mathbb{R})$ concerning the manifold of probability distributions.
We show that a matching prior pair exists 
for an $\alpha$-affine parameterization, that is, the parameterization with $\alpha$-connection equal to $0$. Further, we also provide an explicit construction of the matching prior pair using $\alpha$-parallel priors (\citealp{takeuchi2005}).
This information-geometrical notation appears because the posterior expectation elicits the information about the flatness of the statistical model with respect to the $(-1)$-connection as observed in \cite{komaki1996} and in \cite{okudo2021}.

There is a literature on bridging the gap between the MAP estimation and the posterior expectation.
In the objective Bayesian literature, a prior yielding the posterior mean asymptotically equal to the maximum likelihood estimate (MLE) is \revise{called the moment matching prior}.
\cite{ghosh2011} derives a formula for constructing a moment matching prior.
\cite{hashimoto2019} extends the construction to non-regular statistical models. 
\cite{yanagimoto2023} extends the moment matching prior to the conditional inference.
Our matching prior pair includes the moment matching prior and naturally extends its idea to the MAP estimate based on a non-uniform distribution.
\cite{gribonval2013} reveals an elegant construction of an exact matching prior pair for linear inverse problems.
\cite{polson2016} proposes an exact prior pair that matches a density with a MAP estimate plugged-in and a marginal density.
These results are exact in the sense that it holds even in the finite regime of sample size but are limited to several models.
Although our construction relies on the asymptotics with respect to the sample size, it elucidates the connection in general statistical models using information geometry.

The rest of this paper is structured as follows.
Section \ref{sec:marchingpriorpairs} delivers the main result, an information-geometrical construction of a matching prior pair.
Section \ref{sec:examples} displays analytical examples that examine the main result.
Section \ref{sec:experiments} presents numerical examples using synthetic and real data.
All technical proofs are presented in Section \ref{sec:proofs}.

\if0
We have observations
$y^n = \{y(1),y(2),\dots,y(n)\}~~(y(1),\dots,y(n)\in\mathbb{R}^p)$
 independently distributed according to a probability distribution with a density function $p(y;\theta)$ that belongs to a statistical model
\begin{align*}
\mathcal{P}
=\left\{p(y;\theta) \mid \theta \in \theta \right\}.
\end{align*}
We consider a statistical model of a curved exponential family
\begin{align*}
\mathcal{P}
= &\{p(y;\theta)=s(y)
\exp(\theta^i(\theta) y_i-\Psi(\theta(\theta)))  \mid 
\theta=(\theta^a)\in \theta, a=1,\dots,d, \ i=1,\dots,p \},
\end{align*}
where $\theta \subset \mathbb{R}^d$ and $1 \leq d \leq p$.
We consider estimation of $\theta$ based on Bayesian methods.
\fi

\section{Matching prior pairs}
\label{sec:marchingpriorpairs}

We first prepare several notations for the theory and then present the construction of the matching prior pair.

\subsection{Preparation}

Let $\mathcal{Y}$ be a sample space and let $dy$ be a base measure.
Assume that we have observations
$y^n = \{y(1),y(2),\dots,y(n)\}~~(y(1),\dots,y(n)\in\mathcal{Y})$
 independently distributed according to a probability distribution with a density function $p(y\,;\,\theta)$ that belongs to a statistical model
 parameterized by $\theta$:
\begin{align*}
\mathcal{P}
=\left\{p(y\,;\,\theta) \mid \theta \in \Theta \right\}
\,\,\text{with}\,\,\Theta\subset \mathbb{R}^{d}.
\end{align*}
We denote by $\mathrm{E}_{\theta}$ the expectation with respect to the density with $\theta$.

For the theory, we first introduce several information-geometric notations; for details, see \cite{amari1985}.
Components of the Fisher information matrix $g=(g_{ab})_{a,b=1,\ldots,d}$ are defined as
\[
g_{ab}(\theta)
:=
\int 
p(y\,;\,\theta)\{
\partial_{a}\log p(y\,;\,\theta)\} 
\{\partial_{b}\log p(y\,;\,\theta)\}
dy
=
\int \frac{\partial_a p(y\,;\,\theta)\partial_b p(y\,;\,\theta)}{p(y\,;\,\theta)}dy
\]
where $\partial_a = \partial/\partial \theta^a$.
For $a,b=1,\ldots,d$, 
let $g^{ab}$ be a component of the inverse matrix of the Fisher information matrix $g$.
For $a,b,c=1,\ldots,d$,
the m-connection coefficient (the $(-1)$-connection coefficient) and e-connection coefficient (the $1$-connection coefficient) are defined as
\begin{equation}
\begin{split}
\mGamma{abc}{} 
&:= \int \frac{\partial_{a}\partial_{b} p(y\,;\,\theta) \partial_{c} p(y\,;\,\theta)}{p(y\,;\,\theta)}dy \mbox{~~and~~}\\
\eGamma{abc}{} 
&:= \int p(y;\theta) \{\partial_{a}\partial_{b}\log p(y\,;\,\theta) \}\{\partial_{c}\log p(y\,;\,\theta)\} dy,
\label{re: e-m-connections}
\end{split}
\end{equation}
respectively.
For $a,b,c=1,\ldots,d$,
let
\begin{align*}
 T_{abc}
:=\mGamma{abc}{}-\eGamma{abc}{}
=\int p(y;\theta)\{\partial_a\log p(y\,;\,\theta)\}\{\partial_b\log p(y\,;\,\theta)\}\{\partial_c\log p(y\,;\,\theta)\}
dy.
\end{align*}
Further,
the $\alpha$-connection coefficient for $\alpha\in\mathbb{R}$ is defined as
\begin{align}
\aGamma{abc}{} := \mGamma{abc}{} - \frac{1+\alpha}{2}T_{abc}
=\eGamma{abc}{}+\frac{1-\alpha}{2}T_{abc}
\,\,\,(a,b,c=1,\ldots,d).
\label{eq: alpha connection}
\end{align}
These connections form dual connections, that is,
\begin{align}
\partial_{a}g_{bc}(\theta)
=\aGamma{abc}{}(\theta)
+\minusaGamma{acb}{}(\theta)
\quad
\text{for any}\,\,\alpha\in\mathbb{R}. 
\label{eq: dual structure}
\end{align}
To ease the notation, we use the Einstein summation convention: if an index occurs twice in any one term, once as an upper and once as a lower index, summation over that index is implied.
Let 
\[
T_{a}:=T_{abc}g^{bc}
\quad \text{and}\quad
\aGamma{ab}{~c} := \aGamma{abe}{}g^{ce}
\,\,\,\,\text{for}\,\, a,b,c=1,\ldots,d.\]

We then prepare notions of flatness in the information geometry.
For given $\alpha\in\mathbb{R}$,
the statistical model $\mathcal{P}$ is called $\alpha$-flat if and only if there exists a parameterization with $\aGamma{abc}{}=0$ for all $a,b,c=1,\ldots,d$; e.g., p.47 of \cite{amari1985}.
The parameterization with $\aGamma{abc}{}=0$ for $a,b,c=1,\ldots,d$ is called an $\alpha$-affine coordinate.
Further, the statistical model is said to be  statistically equi-affine when
$\partial_{a}T_{b}=\partial_{b}T_{a}$ for $\theta\in\Theta$; e.g., Definition 2 of \cite{takeuchi2005}.
The concept of statistical equi-affinity is important to the existence of the subsequent $\alpha$-parallel priors and we have a handy sufficient condition for the statistically equi-affinity as described below.
\begin{lem}[\cite{lauritzen1987}; Propositions 3 and 4 of \cite{takeuchi2005}]
\label{lem:statisticallyequiaffine}
If the model is $\alpha$-flat for a certain $\alpha\ne0$, it is statistically equi-affine.
\end{lem}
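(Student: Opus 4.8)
The plan is to pass to an $\alpha$-affine coordinate system, whose existence is exactly what $\alpha$-flatness provides, and to show that in such a chart the covector $T_a$ is the gradient of a scalar. Closedness of an exact one-form then forces $\partial_a T_b = \partial_b T_a$, and since this identity is coordinate-free it transfers back to the original parameterization, giving statistical equi-affinity.

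Concretely, I would first fix an $\alpha$-affine coordinate, so that $\aGamma{abc}{}=0$ for all $a,b,c$. Substituting this into the defining relation \eqref{eq: alpha connection} immediately gives $\mGamma{abc}{}=\frac{1+\alpha}{2}T_{abc}$ throughout the chart. Next I would feed $\aGamma{abc}{}=0$ into the dual-connection identity \eqref{eq: dual structure}, which reads $\partial_a g_{bc}=\aGamma{abc}{}+\minusaGamma{acb}{}$; the first term drops, and expanding the $(-\alpha)$-connection as $\minusaGamma{acb}{}=\mGamma{acb}{}-\frac{1-\alpha}{2}T_{acb}$, using the value of $\mGamma{acb}{}$ just obtained together with the full symmetry of $T_{abc}$, collapses everything to $\partial_a g_{bc}=\alpha\,T_{abc}$. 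Here the hypothesis $\alpha\neq 0$ enters decisively: it lets me invert this to $T_{abc}=\alpha^{-1}\partial_a g_{bc}$.

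The final step contracts with $g^{bc}$ and invokes the standard identity $g^{bc}\partial_a g_{bc}=\partial_a\log\det g$, yielding $T_a=T_{abc}g^{bc}=\alpha^{-1}\partial_a\log\det g=\partial_a\!\left(\alpha^{-1}\log\det g\right)$. Thus $T_a$ is an exact one-form in the $\alpha$-affine chart, so $\partial_a T_b=\partial_a\partial_b\!\left(\alpha^{-1}\log\det g\right)=\partial_b T_a$ by commutativity of partial derivatives. Since $T_a=T_{abc}g^{bc}$ transforms as a covector, the antisymmetric object $\partial_a T_b-\partial_b T_a$ is a genuine (coordinate-invariant) two-form; having shown it vanishes in the affine chart, it vanishes in every chart, which is precisely the statement that the model is statistically equi-affine.

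The step I expect to demand the most care is the index bookkeeping in the dual-structure identity, namely expanding $\minusaGamma{acb}{}$ with the sign of $\alpha$ reversed and keeping the ordering $acb$ straight so that the symmetry of $T_{abc}$ can be used to land on $\partial_a g_{bc}=\alpha\,T_{abc}$; this is also exactly the point where $\alpha\neq 0$ is indispensable, since dividing by $\alpha$ is what turns $T_a$ into a pure gradient.
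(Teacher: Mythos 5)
Your proof is correct, and there is nothing in the paper to compare it against step-by-step: the paper imports this lemma purely by citation (Lauritzen 1987; Takeuchi--Amari 2005, Propositions 3 and 4) and gives no proof of its own, so what you have written is essentially a self-contained reconstruction of the standard argument in those references. The computation checks out: with $\aGamma{abc}{}=0$, equation (\ref{eq: alpha connection}) gives $\mGamma{abc}{}=\frac{1+\alpha}{2}T_{abc}$, the duality relation (\ref{eq: dual structure}) then collapses to $\partial_a g_{bc}=\alpha\,T_{abc}$ (using the full symmetry of $T_{abc}$), and contracting with $g^{bc}$ together with Jacobi's formula $g^{bc}\partial_a g_{bc}=\partial_a\log\det g$ yields $T_a=\partial_a\bigl(\alpha^{-1}\log\det g\bigr)$, which is closed; division by $\alpha$ is indeed the one and only place where $\alpha\neq 0$ enters. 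Your final transfer step is also the right one, and it is the point most worth spelling out: connection coefficients are not tensors (a fact the paper itself emphasizes in its remark on non-invariance), but $T_{abc}$ and $g^{bc}$ are, so $T_a$ is a covector and $\partial_a T_b-\partial_b T_a$ is the coordinate-free exterior derivative of the one-form $T_a\,d\theta^a$; its vanishing in the $\alpha$-affine chart therefore holds in every parameterization, which is exactly what makes the conclusion ``statistically equi-affine'' well posed.
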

We thirdly introduce $\alpha$-parallel priors.
For $\alpha\in\mathbb{R}$, an 
$\alpha$-parallel prior $\pi_{\alpha}$ proposed by \cite{takeuchi2005} is defined by 
\begin{align}
\partial_a \log \pi_\alpha (\theta) = \aGamma{ab}{~b}(\theta) \,\,\,(a =1,\ldots,d)
\label{eq: alpha parallel prior}
\end{align}
if it exists.
This class contains several non-informative priors proposed in the objective Bayesian literature (e.g., \citealp{tanaka2023}).
First, it includes the well-known Jeffreys prior $\pi_{\mathrm{J}}(\theta):=|g(\theta)|^{1/2}$ with the determinant $|\cdot|$ 
as the $0$-parallel prior:
\[
\partial_{a}\log \pi_{\rm J}(\theta) = \frac{1}{2}\partial_{a}\log |g|(\theta)=\frac{1}{2}g^{bc}(\theta)\partial_{a}g_{bc}(\theta)
=\zeroGamma{ab}{~b}(\theta)
\,\,\,(a=1,\ldots,d).
\]
Second, it has the $\chi^{2}$-prior $\pi_{\chi^{2}}(\theta)$ proposed by  \cite{liu2014} as $1/2$-parallel prior:
\[
\partial_{a}\log \pi_{\chi^{2}}(\theta)=\halfGamma{ab}{~b}(\theta)
\,\,\,(a=1,\ldots,d),
\]
which is recently pointed out by \cite{tanaka2023}.
For $\alpha=1$, we call this e-parallel prior $\pi_{\rm e}$,
and for $\alpha=-1$, we call this m-parallel prior $\pi_{\rm m}$.
\cite{takeuchi2005} find the following lemmas for the existence of $\alpha$-parallel priors.
\begin{lem}
[Proposition 2 of \cite{takeuchi2005}]
A statistically equi-affine statistical model has $\alpha$-parallel priors for any $\alpha\in\mathbb{R}$. Otherwise, the model has only the $0$-parallel prior.
\end{lem}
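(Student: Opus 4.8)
The plan is to read the defining relation \eqref{eq: alpha parallel prior} as the requirement that the differential $1$-form $\omega^{\alpha}$ with components $\aGamma{ab}{~b}$ admit a potential, namely $\log\pi_{\alpha}$. On a simply connected domain $\Theta$ the Poincar\'e lemma tells us that such a potential exists if and only if $\omega^{\alpha}$ is closed, that is,
\begin{align*}
\partial_{a}\aGamma{bc}{~c}(\theta)=\partial_{b}\aGamma{ac}{~c}(\theta)\quad\text{for all }a,b=1,\ldots,d.
\end{align*}
So the whole question reduces to computing the curl of $\omega^{\alpha}$ and determining for which $\alpha$ it vanishes.

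First I would reduce the problem to the already-understood $0$-parallel (Jeffreys) case. Subtracting the two instances of definition \eqref{eq: alpha connection} gives $\aGamma{abc}{}-\zeroGamma{abc}{}=-\tfrac{\alpha}{2}T_{abc}$, and contracting with $g^{bc}$ yields the key identity
\begin{align*}
\aGamma{ab}{~b}(\theta)=\zeroGamma{ab}{~b}(\theta)-\frac{\alpha}{2}T_{a}(\theta)=\partial_{a}\log\pi_{\rm J}(\theta)-\frac{\alpha}{2}T_{a}(\theta),
\end{align*}
where the last step uses the Jeffreys-prior identity recalled in the excerpt. Since $\partial_{a}\log\pi_{\rm J}$ is an exact gradient its curl vanishes, so differentiating the identity gives
\begin{align*}
\partial_{a}\aGamma{bc}{~c}-\partial_{b}\aGamma{ac}{~c}=-\frac{\alpha}{2}\bigl(\partial_{a}T_{b}-\partial_{b}T_{a}\bigr).
\end{align*}

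The dichotomy is now immediate from this formula. If the model is statistically equi-affine, then $\partial_{a}T_{b}=\partial_{b}T_{a}$, the right-hand side vanishes for every $\alpha$, and a potential $\log\pi_{\alpha}=\log\pi_{\rm J}-\tfrac{\alpha}{2}\int T_{a}\,d\theta^{a}$ exists for all $\alpha\in\mathbb{R}$. Conversely, if the model is not equi-affine there are indices with $\partial_{a}T_{b}\ne\partial_{b}T_{a}$, so the curl is nonzero whenever $\alpha\ne0$ and no $\alpha$-parallel prior can exist; only the case $\alpha=0$ survives, and it does so unconditionally because it reduces to the globally defined Jeffreys prior $\pi_{\rm J}=|g|^{1/2}$.

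The computations are routine once the identity for $\aGamma{ab}{~b}$ is established; the only genuinely delicate point is the closedness-versus-exactness equivalence, which is a Poincar\'e-lemma statement and hence clean only up to the topology of $\Theta$. Global existence requires $\Theta$ to be simply connected, whereas local existence of the potential is automatic, and I would phrase the conclusion as local existence to match the convention of \cite{takeuchi2005}.
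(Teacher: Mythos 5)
Your proof is correct. One thing to note for context: this paper never proves the lemma at all --- it is imported verbatim as Proposition 2 of \cite{takeuchi2005} --- so there is no internal proof to compare against; the relevant benchmarks are the cited source and the way the paper deploys the same machinery elsewhere. Your argument is precisely that machinery: you reduce the existence of $\log\pi_{\alpha}$ to closedness of the $1$-form with components $\aGamma{ab}{~b}$ via the integrability criterion (Lemma \ref{lem:compatible}), and you compute the curl through the identity $\aGamma{ab}{~b}=\partial_{a}\log\pi_{\rm J}-\tfrac{\alpha}{2}T_{a}$, obtained by contracting $\aGamma{abc}{}-\zeroGamma{abc}{}=-\tfrac{\alpha}{2}T_{abc}$ with $g^{bc}$ and invoking $\zeroGamma{ab}{~b}=\partial_{a}\log\pi_{\rm J}$. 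This is exactly how the paper proves its own corollary on matching prior pairs, where the PDE (\ref{eq:match2}) has compatibility condition $\partial_{a}T_{b}=\partial_{b}T_{a}$, i.e.\ statistical equi-affinity. Your treatment of the two delicate points is also sound: the converse direction works because a nonzero curl at a single point already obstructs exactness on any neighborhood of that point for every $\alpha\neq0$, while $\alpha=0$ survives unconditionally since $\pi_{\rm J}=|g|^{1/2}$ is explicitly given; and your local-versus-global caveat (Poincar\'e lemma versus the topology of $\Theta$) correctly matches the ``exists locally'' convention of Lemma \ref{lem:compatible} and of \cite{takeuchi2005}.
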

\begin{lem}
[Proposition 5 of \cite{takeuchi2005}]
\label{lem: sufficient condition for alpha parallel prior}
If a statistical model is $\alpha_0$-flat for a certain $\alpha_0\neq 0$,
then the model has $\alpha$-parallel priors for arbitrary $\alpha\in\mathbb{R}$.
\end{lem}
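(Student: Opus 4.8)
The plan is to read the defining relation \eqref{eq: alpha parallel prior} as an exactness (gradient) equation for the scalar $\log\pi_\alpha$ and to reduce its solvability to a single integrability condition phrased through the tensor $T_a$. On a simply connected parameter set $\Theta$, Poincar\'e's lemma says that a potential with $\partial_a\log\pi_\alpha=\aGamma{ab}{~b}$ exists if and only if the $1$-form $\aGamma{ab}{~b}\,d\theta^a$ is closed, that is,
\begin{equation}
\partial_a\aGamma{cb}{~b}(\theta)=\partial_c\aGamma{ab}{~b}(\theta)\qquad(a,c=1,\ldots,d).
\end{equation}
Thus the whole statement reduces to checking this symmetry for every $\alpha\in\mathbb{R}$.

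Next I would express the obstruction to closedness through $T_a$ alone. Contracting \eqref{eq: alpha connection} with $g^{be}$ gives $\aGamma{ab}{~b}=\mGamma{ab}{~b}-\tfrac{1+\alpha}{2}T_a$, with $T_a=T_{abc}g^{bc}$. Because the $0$-parallel (Jeffreys) prior always exists---indeed $\zeroGamma{ab}{~b}=\tfrac12\partial_a\log|g|$ is manifestly a gradient---the case $\alpha=0$ is closed, which forces $\partial_a\mGamma{cb}{~b}-\partial_c\mGamma{ab}{~b}=\tfrac12(\partial_aT_c-\partial_cT_a)$. Substituting into the antisymmetrized derivative of $\aGamma{ab}{~b}$ cancels the m-connection terms and leaves
\begin{equation}
\partial_a\aGamma{cb}{~b}-\partial_c\aGamma{ab}{~b}=-\frac{\alpha}{2}\bigl(\partial_aT_c-\partial_cT_a\bigr).
\end{equation}
Hence for $\alpha\neq0$ the form is closed precisely when $\partial_aT_c=\partial_cT_a$, i.e.\ when the model is statistically equi-affine, while for $\alpha=0$ it is automatic. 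This recovers the preceding lemma (Proposition 2 of \cite{takeuchi2005}), which I would either cite directly or reprove by this short computation.

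Finally I would feed in the hypothesis via Lemma \ref{lem:statisticallyequiaffine}: $\alpha_0$-flatness for some $\alpha_0\neq0$ makes the model statistically equi-affine, so $\partial_aT_c=\partial_cT_a$ throughout $\Theta$. By the identity above, $\aGamma{ab}{~b}\,d\theta^a$ is then closed for every $\alpha$, and integrating yields an $\alpha$-parallel prior for each $\alpha\in\mathbb{R}$. The genuine mathematical content lives in Lemma \ref{lem:statisticallyequiaffine}---that a nonzero flatness forces the symmetry of $\partial_aT_b$---which I am entitled to assume here; what remains is bookkeeping, and the one point requiring care is the passage from local to global solvability of the gradient equation, for which I would assume $\Theta$ simply connected (or be content with the prior defined locally up to a multiplicative constant).
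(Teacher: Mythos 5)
Your proof is correct, but note that the paper itself offers no proof of this statement: it is imported verbatim as Proposition 5 of \cite{takeuchi2005}, and in the paper's logic it is simply the composition of the two lemmas stated just before it, namely Lemma \ref{lem:statisticallyequiaffine} ($\alpha_0$-flatness with $\alpha_0\neq 0$ implies statistical equi-affinity) and the paper's Lemma 2.2, Proposition 2 of \cite{takeuchi2005} (equi-affinity implies existence of $\alpha$-parallel priors for every $\alpha$). Your final paragraph performs exactly this chaining, so structurally you follow the intended route; what you add beyond the paper is a self-contained proof of the second link. That added computation checks out: contracting the relation $\aGamma{abc}{}=\mGamma{abc}{}-\tfrac{1+\alpha}{2}T_{abc}$ with $g^{bc}$ gives $\aGamma{ab}{~b}=\mGamma{ab}{~b}-\tfrac{1+\alpha}{2}T_a$; since $\zeroGamma{ab}{~b}=\tfrac12\partial_a\log|g|$ is a gradient, its $1$-form is closed, which yields $\partial_a\mGamma{cb}{~b}-\partial_c\mGamma{ab}{~b}=\tfrac12(\partial_aT_c-\partial_cT_a)$, and hence the obstruction for general $\alpha$ collapses to $-\tfrac{\alpha}{2}(\partial_aT_c-\partial_cT_a)$. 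This correctly identifies closedness (for $\alpha\neq0$) with statistical equi-affinity, recovering Lemma 2.2 rather than merely citing it. Your two caveats are also well placed: the genuinely deep input (flatness forces equi-affinity) is precisely the cited Lemma \ref{lem:statisticallyequiaffine}, which you are entitled to assume, and the local-versus-global distinction you flag is consistent with the paper's Lemma \ref{lem:compatible}, which likewise guarantees only local existence of the potential.
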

\begin{example}[Exponential family]
Consider an exponential family with
a sufficient statistic $T(y)\in\mathbb{R}^{d}$:
\[
\{\, 
\exp\{\theta^{\top}T(y)-\psi(\theta)\}m(y) \mid 
\theta\in\Theta
\},
\]
where $\psi(\theta)$ is the potential function and $m(y)$ is a given function.
The exponential family is $\mathrm{e}$- \& $\mathrm{m}$-flat ($\pm 1$-flat), that is, it has parameterizations $\theta$ and $\eta=\mathrm{E}_{\theta}[T(Y)]$ with
$\eGamma{ab}{~c}(\theta)=0$ and $\mGamma{ab}{~c}(\eta)=0$, respectively.
Thus, Lemma \ref{lem: sufficient condition for alpha parallel prior} implies that exponential families have $\alpha$-parallel priors for arbitrary $\alpha$. In particular, the $\mathrm{e}$-parallel prior is a uniform prior with respect to $\theta$, and
the $\mathrm{m}$-parallel prior is a uniform prior with respect to $\eta$.
\end{example}

We conclude this subsection by introducing a condition on the existence of the solution of a certain partial differential equation.
For a differentiable function $h(\theta)=(h_{1}(\theta),\ldots, h_{d}(\theta))^{\top}$, consider the partial differential equation of $u(\theta)$ given by
\[
\partial_{a}u (\theta)=h_{a}(\theta)\quad (a=1,\ldots,d).
\]
\begin{lem}[p.~348 of \cite{handbookde}; Section 1-3 of \cite{matsudaiwanami}]
\label{lem:compatible}
If $\partial_{b}h_{a}(\theta)=\partial_{a}h_{b}(\theta)$ for $1\le a < b \le d$,
then $u(\theta)$ exists locally. 
\end{lem}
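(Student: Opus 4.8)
The plan is to construct $u$ explicitly as a line integral from a fixed base point and to use the compatibility condition $\partial_b h_a = \partial_a h_b$ to verify that the resulting function has the prescribed gradient. Since the claim is purely local, I would fix an arbitrary $\theta_0\in\Theta$ and restrict attention to a convex (for instance ball-shaped) neighborhood $U\subset\Theta$ of $\theta_0$, so that for every $\theta\in U$ the whole segment $\{\theta_0+t(\theta-\theta_0):t\in[0,1]\}$ remains in $U$. This convexity is what makes the subsequent single-variable integration legitimate.

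First I would define, for $\theta\in U$,
\[
u(\theta):=\int_0^1 h_a\big(\theta_0+t(\theta-\theta_0)\big)\,(\theta^a-\theta_0^a)\,dt,
\]
with the summation convention in force. Because $h$ is continuously differentiable and the segment of integration is compact, the integrand together with its $\theta$-derivatives is bounded uniformly in $t$, which is what I would invoke to differentiate under the integral sign.

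Next I would compute $\partial_b u$. Differentiation under the integral produces two contributions: a term $t\,\partial_b h_a(\cdots)(\theta^a-\theta_0^a)$ from differentiating $h_a$, and a term $h_b(\cdots)$ from differentiating the factor $(\theta^a-\theta_0^a)$. This is exactly the point where the hypothesis enters: substituting $\partial_b h_a=\partial_a h_b$ and using the chain-rule identity $\tfrac{d}{dt}\,h_b\big(\theta_0+t(\theta-\theta_0)\big)=\partial_a h_b(\cdots)(\theta^a-\theta_0^a)$, the first contribution becomes $t\,\tfrac{d}{dt}h_b(\cdots)$. The integrand then collapses to the total $t$-derivative $\tfrac{d}{dt}\big[t\,h_b\big(\theta_0+t(\theta-\theta_0)\big)\big]$, and the fundamental theorem of calculus gives $\partial_b u(\theta)=h_b(\theta)$, as required.

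The only genuinely delicate point I anticipate is justifying the interchange of differentiation and integration; everything else is the recognition of a total derivative, which is made possible solely by the symmetry assumption. As an alternative matching the classical treatment of exact equations, one could instead argue coordinate by coordinate: integrate $h_1$ in $\theta^1$, then correct successively in $\theta^2,\ldots,\theta^d$, using $\partial_b h_a=\partial_a h_b$ at each stage to ensure each correction term is independent of the variables already handled. That route is more bookkeeping-heavy, so I would present the line-integral argument as the cleaner proof.
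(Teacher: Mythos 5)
Your proof is correct, but note that the paper does not actually prove this lemma at all: it is stated as a known result, with pointers to standard references on differential equations (it is the classical integrability criterion for exact equations, i.e., the Poincar\'e lemma for $1$-forms). What you have supplied is the standard homotopy-formula proof of that cited result: restrict to a convex neighborhood of $\theta_0$, define $u(\theta)=\int_0^1 h_a\bigl(\theta_0+t(\theta-\theta_0)\bigr)(\theta^a-\theta_0^a)\,dt$, differentiate under the integral, use the symmetry $\partial_b h_a=\partial_a h_b$ to recognize the integrand as $\tfrac{d}{dt}\bigl[t\,h_b\bigl(\theta_0+t(\theta-\theta_0)\bigr)\bigr]$, and apply the fundamental theorem of calculus. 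All of these steps are sound, and the claim is local, so working on a star-shaped neighborhood is exactly what is needed. The only caveat worth flagging: the paper states the hypothesis for a ``differentiable'' $h$, whereas your justification of differentiation under the integral sign (and, implicitly, the meaningfulness of the symmetry hypothesis) uses continuity of the partial derivatives, i.e., $h\in C^1$. This mild strengthening is harmless here — in the paper's application the $h_a$ are the quantities $T_a(\theta)$, which are smooth under the paper's regularity conditions — but you should state it explicitly rather than pass over it. Your proposal thus buys self-containedness where the paper relies on a citation, at the cost of this one extra (and in context innocuous) smoothness assumption.
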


\subsection{Main results}
\label{sec:expansion}
We present information-geometrical condition and construction of matching prior pairs.
We begin with an asymptotic condition of matching prior pairs for general models, and then we derive a simple form for an $\alpha$-affine coordinate in a statistically equi-affine model.
Lastly, we derive matching prior pairs for other statistics including variance and higher-order moments.
In the rest of the paper,
we assume regularity conditions 
\revise{for asymptotic expansions of posterior mean estimators and MAP estimators, and we consider only the class of priors that satisfy the regularity conditions.
For the details of the regularity conditions, see Supplementary Material. 
}

The following theorem delivers an asymptotic condition to match a posterior mean based on a prior $\pi_{\mathrm{PM}}$ and a MAP estimate based on a prior $\pi_{\mathrm{MAP}}$
except for terms of ${\rm o}_p(n^{-1})$.
The proof is given in Section \ref{sec:proofs}.
\begin{thm}
\label{thm_match}
The posterior mean $\hat{\theta}_{\rm PM}$ based on a prior $\pi_{\rm PM}$ and the MAP estimate $\hat{\theta}_{\rm MAP}$ based on a prior $\pi_{\rm MAP}$ coincide except for terms of $o_p(n^{-1})$ when the prior pair $(\pi_{\mathrm{PM}},\pi_{\mathrm{MAP}})$ satisfies,
for $a=1,\ldots,d$,
\begin{align}
\label{eq:match}
\partial_{a}\log\frac{\pi_{\rm PM}}{\pi_{\rm MAP}}(\hat{\theta}_{\rm MLE})
-
\left(
\partial_{a}\log \pi_{\rm J}(\hat{\theta}_{\rm MLE})
+\frac{1}{2}g^{cd}(\hat{\theta}_{\rm MLE})
\eGamma{cda}{}(\hat{\theta}_{\rm MLE})
\right)=\mathrm{o}_p(n^{-1}),
\end{align}
where $\hat{\theta}_{\rm MLE}$ is the MLE.
\end{thm}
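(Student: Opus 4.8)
The plan is to expand both estimators about the maximum likelihood estimator $\hat{\theta}_{\rm MLE}$ to order $n^{-1}$ and to match the two expansions term by term. Write $\ell_n(\theta)=\sum_{i}\log p(y(i)\,;\,\theta)$, so that the posterior density is proportional to $\pi_{\rm PM}(\theta)\exp\ell_n(\theta)$ and the MAP estimate maximises $\log\pi_{\rm MAP}(\theta)+\ell_n(\theta)$. The MAP side is the easy one: I would differentiate the MAP objective, whose stationarity reads $\partial_a\log\pi_{\rm MAP}(\hat{\theta}_{\rm MAP})+\partial_a\ell_n(\hat{\theta}_{\rm MAP})=0$, and expand the score $\partial_a\ell_n$ about $\hat{\theta}_{\rm MLE}$, where it vanishes. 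Since $\partial_a\partial_b\ell_n(\hat{\theta}_{\rm MLE})=-n\,g_{ab}(\hat{\theta}_{\rm MLE})+O_p(\sqrt n)$ while $\partial_a\log\pi_{\rm MAP}=O(1)$, balancing the two leading contributions yields
\[
\hat{\theta}^{a}_{\rm MAP}=\hat{\theta}^{a}_{\rm MLE}+\frac{1}{n}\,g^{ab}(\hat{\theta}_{\rm MLE})\,\partial_b\log\pi_{\rm MAP}(\hat{\theta}_{\rm MLE})+\mathrm{o}_p(n^{-1}),
\]
the quadratic-in-displacement and prior-curvature corrections all being $O_p(n^{-2})$ under the regularity conditions.

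For the posterior mean I would pass to the local coordinate $u=\sqrt n\,(\theta-\hat{\theta}_{\rm MLE})$ and carry out a Laplace/Edgeworth expansion of the ratio defining $\hat{\theta}_{\rm PM}$. The quadratic part of $\ell_n$ produces a Gaussian weight with covariance $g^{ab}$; the cubic part contributes $\tfrac{1}{6\sqrt n}K_{bcd}\,u^bu^cu^d$ with $K_{bcd}:=\mathrm{E}_{\theta}[\partial_b\partial_c\partial_d\log p]$, and the prior contributes $\tfrac{1}{\sqrt n}\,u^b\partial_b\log\pi_{\rm PM}$. The $O(n^{-1/2})$ mean vanishes by oddness of these corrections against the Gaussian, and evaluating the surviving moments (using Isserlis' formula $\mathrm{E}[u^au^bu^cu^d]=g^{ab}g^{cd}+g^{ac}g^{bd}+g^{ad}g^{bc}$ for the cubic term) gives
\[
\hat{\theta}^{a}_{\rm PM}=\hat{\theta}^{a}_{\rm MLE}+\frac{1}{n}\Big(g^{ab}\partial_b\log\pi_{\rm PM}+\tfrac12\,g^{ab}g^{cd}K_{bcd}\Big)(\hat{\theta}_{\rm MLE})+\mathrm{o}_p(n^{-1}).
\]
Equating the two expansions, cancelling the common factor $g^{ab}/n$ and lowering the index $a$, the matching requirement collapses to $\partial_a\log(\pi_{\rm PM}/\pi_{\rm MAP})=-\tfrac12\,g^{cd}K_{acd}+\mathrm{o}_p(n^{-1})$ at $\hat{\theta}_{\rm MLE}$.

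It remains to recognise the right-hand side as the bracket in \eqref{eq:match}. Differentiating $\int p\,dy=1$ three times gives $\int\partial_a\partial_b\partial_c p\,dy=0$, and expanding $\partial_a\partial_b\partial_c p$ in log-derivatives yields the fully symmetric identity $K_{abc}=-T_{abc}-(\eGamma{abc}{}+\eGamma{acb}{}+\eGamma{bca}{})$. Contracting with $g^{cd}$ and using the $\alpha=1$ dual structure $\partial_a g_{bc}=\eGamma{abc}{}+\mGamma{acb}{}$ together with $\mGamma{abc}{}=\eGamma{abc}{}+T_{abc}$ identifies $\partial_a\log\pi_{\rm J}=\tfrac12 g^{cd}\partial_a g_{cd}=g^{cd}\eGamma{acd}{}+\tfrac12 T_a$, whence $-\tfrac12\,g^{cd}K_{acd}=\partial_a\log\pi_{\rm J}+\tfrac12\,g^{cd}\eGamma{cda}{}$, which is exactly the claimed condition. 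I expect the main obstacle to be the rigorous control of the posterior-mean expansion rather than the algebra: one must justify interchanging the local expansion with integration, bound the posterior mass outside a shrinking neighbourhood of $\hat{\theta}_{\rm MLE}$, and verify that the discarded terms are genuinely $\mathrm{o}_p(n^{-1})$ rather than only formally of higher order. This is precisely the role of the stated regularity conditions; by comparison the MAP expansion and the geometric bookkeeping are routine.
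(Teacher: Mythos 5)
Your proposal is correct and follows essentially the same route as the paper's proof: a Taylor expansion of the stationarity condition for the MAP estimate, a Laplace expansion of the posterior mean with the cubic log-likelihood term retained, and then Bartlett-type identities together with the dual structure $\partial_a g_{bc}=\eGamma{abc}{}+\mGamma{acb}{}$ to rewrite the matching condition in the form (\ref{eq:match}). Your fully symmetric identity $K_{abc}=-T_{abc}-(\eGamma{abc}{}+\eGamma{acb}{}+\eGamma{bca}{})$ is equivalent to the paper's Bartlett identity $\mathrm{E}_{\theta}[\partial_{bcd}\bar{L}]=-\partial_b g_{cd}-\eGamma{cdb}{}$, so the difference is only in bookkeeping, not in substance.
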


\begin{rem}
Theorem \ref{thm_match} includes the construction of the moment matching prior
proposed by \cite{ghosh2011}.
The moment matching prior is the prior that yields the posterior mean asymptotically equal to the MLE.
\cite{tanaka2023} rewrites the partial differential equation for the moment matching prior $\pi_{\rm MM}$ in an information-geometrical way:
\[
\partial_{a}\log \pi_{\rm MM} (\theta)
-\left(
\partial_{a}\log{\pi_{\rm J}}(\theta)
+ \frac{1}{2}g^{cd}(\theta)\eGamma{cda}{}(\theta)
\right)=0
\,\,\,(a=1,\ldots,d).
\]
As the prior that yields MLE as the MAP estimate is a uniform prior,
the moment matching prior $\pi_{\rm MM}$ satisfies (\ref{eq:match}) as the matching prior pair of a uniform prior.
\end{rem}
\revise{
\begin{rem}
From the objective Bayesian perspective,
the usage of the MAP estimate is somewhat controversial as the MAP estimate is not invariant with respect to the parameterization; see 
\cite{druilhetmarin2007} for the literature.
To resolve this issue,
\cite{druilhetmarin2007} proposes JMAP estimate, that is, the MAP estimate obtained from the original prior $\pi$ divided by the Jeffreys prior $\pi_{\rm{J}}$.
Our result also tells the connection between the posterior mean based on a prior $\pi_{\rm{PM}}$ and the JMAP estimate based on a prior $\pi_{\rm{JMAP}}$.
Putting $\pi_{\rm{JMAP}}/\pi_{\rm{J}}$ to $\pi_{\rm{MAP}}$, 
equation (\ref{eq:match}) becomes
\[
\partial_{a} \log \frac{\pi_{\rm{PM}}}{\pi_{\rm{JMAP}}}(\hat{\theta}_{\rm{MLE}})
-\frac{1}{2}g^{cd}(\hat{\theta}_{\rm{MLE}})
\eGamma{cda}{}(\hat{\theta}_{\rm{MLE}})
=o_{p}(n^{-1}),
\]
which implies seeking a prior pair between the posterior mean and the JMAP estimate is more directly related to the behavior of $\mathrm{e}$-connection coefficients.
\end{rem}
}

\begin{rem}
\revise{
In connection with the invariance in the previous remark,
we should mention the lack of invariance in a matching prior pair.
In fact,
equation (\ref{eq:match}) is not invariant with respect to parameterization, which implies that an explicit form of a matching prior pair depends on parameterization.
Consider changing the parameterization $\theta$ to 
$\xi$.
By using change of variables,
equation (\ref{eq:match}) for the new parameterization $\xi$ becomes  
\begin{align*}
&\frac{\partial \theta^{a}}{\partial \xi^{a'}}(\hat{\theta}_{\rm{MLE}})
\left\{
\frac{\partial}{\partial \theta^{a}}\log \frac{\pi_{\rm{PM}}}{\pi_{\rm{MAP}}}(\hat{\theta}_{\rm{MLE}})
-\left(\frac{\partial}{\partial \theta^{a}}\log \pi_{\rm{J}}(\hat{\theta}_{\rm{MLE}})
+\frac{1}{2}
g^{cd}(\hat{\theta}_{\rm{MLE}})
\eGamma{cda}{}(\hat{\theta}_{\rm{MLE}})
\right)
\right.\\
&\left.  \quad \quad\quad \quad \quad 
-\frac{1}{2}
g^{cd}(\hat{\theta}_{\rm{MLE}})
g_{ab}(\hat{\theta}_{\rm{MLE}})
\frac{\partial \xi^{c'}}{\partial \theta^{c}}(\hat{\theta}_{\rm{MLE}})
\frac{\partial \xi^{d'}}{\partial \theta^{d}}(\hat{\theta}_{\rm{MLE}})
\frac{\partial^{2}\theta^{b}}{
\partial \xi^{c'} \partial \xi^{d'} 
}(\hat{\theta}_{\rm{MLE}})
\right\}
=o_{p}(n^{-1}).
\end{align*}
So, even the existence of a matching prior pair depends on parameterization.
This is mainly because the connection coefficient is not a tensor,
and 
 is reasonable because the form of the posterior mean itself changes according to the parameterization.
 Given this fact, we shall discuss the existence and the construction of a matching prior pair 
 below.
 }
\end{rem}

In a one-dimensional statistical model,
an explicit construction of a matching prior pair is easy.
Consider the following ordinary differential equation:
\begin{align*}
\frac{d}{d\theta}\log\frac{\pi_{\rm PM}(\theta)}{\pi_{\rm MAP}(\theta)\pi_{\rm J}(\theta)}=\frac{1}{2}g^{11}(\theta)\eGamma{111}{}(\theta).
\end{align*}
The integration with respect to $\theta$ yields, for arbitrary $\theta_{0}\in\Theta$,
\begin{align*}
\frac{\pi_{\rm PM}(\theta)}{\pi_{\rm MAP}(\theta)}
\propto \pi_{\rm J}(\theta) \exp\left\{\int_{\theta_{0}}^{\theta} \left(
\frac{1}{2}g^{11}(\theta')\eGamma{111}{}(\theta')d\theta'
\right)
\right\}.
\end{align*}

Yet, in a multi-dimensional statistical model, 
even the existence of a matching prior pair is non-trivial.
We then seek a sufficient condition for the existence of a matching prior pair and an explicit construction of the pair.
The following corollary provides a sufficient condition of the existence and a explicit construction using information geometry.
\begin{cor}
Assume that the model is statistically equi-affine
and $\theta$ is $\alpha$-affine for a certain $\alpha\in\mathbb{R}$.
Then, a matching prior pair exists.
Further, the prior pair $(\pi_{\rm PM},\pi_{\rm MAP})$ satisfying
\begin{align}
\label{eq:matchingprior}
\frac{\pi_{\rm PM}(\theta)}{\pi_{\rm MAP}(\theta)}
\propto \pi_{\rm J}(\theta)\left(\frac{\pi_{\rm e}(\theta)}{\pi_{\rm m}(\theta)}\right)^{(1-\alpha)/4}
\end{align}
is a matching prior pair; that is,
 the posterior mean based on $\pi_{\rm PM}$ and the MAP estimate based on $\pi_{\rm MAP}$ coincide except for terms of $o_p(n^{-1})$.
\end{cor}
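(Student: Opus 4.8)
The plan is to read off the matching condition from Theorem \ref{thm_match} as a partial differential equation on the whole parameter space and then to exhibit the claimed ratio as its solution, so that existence and construction are settled together. Concretely, if I choose $\pi_{\rm PM}$ and $\pi_{\rm MAP}$ so that $u:=\log(\pi_{\rm PM}/\pi_{\rm MAP})$ solves
\[
\partial_a u(\theta)=\partial_a\log\pi_{\rm J}(\theta)+\tfrac12 g^{cd}(\theta)\eGamma{cda}{}(\theta)=:h_a(\theta)\quad(a=1,\dots,d)
\]
identically in $\theta$, then the bracketed expression in (\ref{eq:match}) vanishes at every $\theta$, in particular at $\hat\theta_{\rm MLE}$, so the pair matches exactly and a fortiori up to $o_p(n^{-1})$. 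Thus it suffices to (i) show this system is locally solvable and (ii) identify the potential $u$ with $\log[\pi_{\rm J}(\pi_{\rm e}/\pi_{\rm m})^{(1-\alpha)/4}]$.

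The first step I would carry out is to simplify the driving term $h_a$ using the hypothesis that $\theta$ is $\alpha$-affine, i.e.\ $\aGamma{abc}{}=0$. By the second expression in (\ref{eq: alpha connection}) this forces $\eGamma{cda}{}=-\tfrac{1-\alpha}{2}T_{cda}$. The one point that needs care is that the contraction $g^{cd}\eGamma{cda}{}$ pairs $g^{cd}$ with the \emph{first two} indices of the $\mathrm{e}$-connection; since $T_{abc}$ is totally symmetric (unlike $\eGamma{}{}$ or $\mGamma{}{}$, which are symmetric only in their first two slots) I may freely write $g^{cd}T_{cda}=T_a$, giving $\tfrac12 g^{cd}\eGamma{cda}{}=-\tfrac{1-\alpha}{4}T_a$ and hence $h_a=\partial_a\log\pi_{\rm J}-\tfrac{1-\alpha}{4}T_a$.

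Next I would settle existence through Lemma \ref{lem:compatible}: the system $\partial_a u=h_a$ admits a local solution as soon as $\partial_b h_a=\partial_a h_b$. The term $\partial_a\log\pi_{\rm J}$ is an exact gradient and contributes a symmetric Hessian, so closedness of $h$ reduces exactly to $\partial_a T_b=\partial_b T_a$, which is the statistical equi-affinity assumed in the hypothesis; this yields a matching pair locally. Finally, to recognize the solution explicitly I would use $\partial_a\log\pi_{\rm e}=\eGamma{ab}{~b}$ and $\partial_a\log\pi_{\rm m}=\mGamma{ab}{~b}$ (both priors exist because equi-affinity guarantees $\alpha$-parallel priors for every $\alpha$) together with the $\alpha=1$ case of (\ref{eq: alpha connection}), $\eGamma{abc}{}=\mGamma{abc}{}-T_{abc}$, to obtain $\partial_a\log(\pi_{\rm e}/\pi_{\rm m})=\eGamma{ab}{~b}-\mGamma{ab}{~b}=-T_a$. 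Substituting gives $h_a=\partial_a\bigl[\log\pi_{\rm J}+\tfrac{1-\alpha}{4}\log(\pi_{\rm e}/\pi_{\rm m})\bigr]$, and integrating recovers (\ref{eq:matchingprior}) up to the irrelevant multiplicative constant. The only genuinely delicate points are the index bookkeeping in the contraction above and keeping straight the two roles played by equi-affinity—once as the integrability condition for $u$ and once as the existence condition for $\pi_{\rm e}$ and $\pi_{\rm m}$—both of which reduce to the single requirement $\partial_a T_b=\partial_b T_a$.
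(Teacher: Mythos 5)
Your proposal is correct and follows essentially the same route as the paper's proof: use the $\alpha$-affine hypothesis to reduce the matching condition of Theorem \ref{thm_match} to the gradient system $\partial_a u = \partial_a\log\pi_{\rm J} - \tfrac{1-\alpha}{4}T_a$, invoke Lemma \ref{lem:compatible} with statistical equi-affinity ($\partial_a T_b = \partial_b T_a$) for existence, and then identify the solution via $\partial_a\log(\pi_{\rm m}/\pi_{\rm e}) = T_a$ to recover (\ref{eq:matchingprior}). Your extra remarks—on the index bookkeeping in $g^{cd}\eGamma{cda}{}$ being harmless because $T_{abc}$ is totally symmetric, and on equi-affinity doing double duty as both the integrability condition and the guarantee that $\pi_{\rm e}$ and $\pi_{\rm m}$ exist—are accurate clarifications of points the paper leaves implicit.
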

\begin{proof}
Observe that we have $\eGamma{abc}{}(\theta)=-\{(1-\alpha)/2\}T_{abc}(\theta)$ ($a,b,c=1,\ldots,d$) for an $\alpha$-affine coordinate $\theta$.
Together with (\ref{eq: alpha connection}), this implies that for an $\alpha$-affine coordinate $\theta$,
the condition (\ref{eq:match}) becomes
\begin{align*}
\partial_{a}\left(
\log \frac{\pi_{\rm PM}(\theta)}{\pi_{\rm MAP}(\theta)\pi_{\rm J}(\theta)}
\right)=-\frac{1-\alpha}{4}T_{a}(\theta)
+{\rm o}_{p}(n^{-1})
\quad
\text{for $\theta=\hat{\theta}_{\rm MLE}$}.
\end{align*}
Then, consider the following partial differential equation:
\begin{align}
\label{eq:match2}
\partial_{a}\left(
\log \frac{\pi_{\rm PM}(\theta)}{\pi_{\rm MAP}(\theta)\pi_{\rm J}(\theta)}
\right)=-\frac{1-\alpha}{4}T_{a}(\theta).
\end{align}
Lemma \ref{lem:compatible} tells that this equation has a solution if
\begin{align*}
\partial_{b}T_{a}(\theta)=\partial_{a}T_{b}(\theta) \quad (1\le a < b \le d),
\end{align*}
which is equal to the statistical equi-affinity of the model
and thus a matching prior pair exists
for an $\alpha$-affine coordinate $\theta$ in a statistically equi-affine model.

Further, by the definition of the e-\&m-parallel priors (\ref{eq: alpha parallel prior}), we have
\[
\partial_{a}\log \frac{\pi_{\rm m}(\theta)}{\pi_{\rm e}(\theta)}=T_{a}(\theta) \,\,\,(a=1,\ldots,d),
\]
and obtain
\[
\partial_{a}\left(
\log \frac{\pi_{\rm PM}(\theta)}{\pi_{\rm MAP}(\theta)\pi_{\rm J}(\theta)}
\right)=
\partial_{a}
\log \left(\frac{\pi_{\rm e}(\theta)}{\pi_{\rm m}(\theta)}
\right)^{(1-\alpha)/4}
\quad(a=1,\ldots,d).
\]
So, the prior pair (\ref{eq:matchingprior}) is a matching prior pair, which completes the proof.
\end{proof}

\revise{
From Lemma \ref{lem:statisticallyequiaffine}, 
$\alpha$-affine coordinates satisfy the assumption above. In the following subsections, we shall give several such examples in submodels of exponential families. However, readers may consider practical applications beyond submodels of exponential families. Although finding the matching prior pair may be difficult in general statistical models, in another direction, one-step calibration between the MAP estimate and the posterior expectation based on a prior is possible using the following corollary.
}
\revise{
\begin{cor}
\label{cor_calibration}
For a prior $\pi$,
the posterior expectation $\hat{\theta}_{\rm{PM}}$ is calculated by
\begin{equation}
\hat{\theta}_{\rm{PM}}^{a} =
\hat{\theta}_{\rm{MAP}}^{a} + \frac{1}{2n}g^{ab}(\hat{\theta}_{\rm{MAP}})g^{cd}(\hat{\theta}_{\rm{MAP}})
\left\{\frac{1}{n}
\sum_{t=1}^{n}\partial_{bcd}\log p(y(t)\,;\,\hat{\theta}_{\rm{MAP}})
\right\}
+o_{p}(n^{-1})
\end{equation}
for $a=1,\ldots,d$.
\end{cor}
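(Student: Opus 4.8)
The plan is to obtain the stated correction by a stochastic Laplace expansion of the posterior mean around the MAP estimate for the \emph{same} prior $\pi$. Write the unnormalized log-posterior as $\phi_n(\theta):=\sum_{t=1}^n \log p(y(t)\,;\,\theta)+\log\pi(\theta)$, so that $\hat\theta_{\rm MAP}$ is its maximizer and the first-order condition $\partial_a\phi_n(\hat\theta_{\rm MAP})=0$ holds for $a=1,\ldots,d$; crucially, this already absorbs the gradient of the prior. Shifting the integration variable by $v:=\theta-\hat\theta_{\rm MAP}$, I would write the quantity to be expanded as
\begin{equation*}
\hat\theta_{\rm PM}^a-\hat\theta_{\rm MAP}^a
=\frac{\int v^a \exp\{\phi_n(\hat\theta_{\rm MAP}+v)-\phi_n(\hat\theta_{\rm MAP})\}\,dv}{\int \exp\{\phi_n(\hat\theta_{\rm MAP}+v)-\phi_n(\hat\theta_{\rm MAP})\}\,dv}.
\end{equation*}

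First I would Taylor expand $\phi_n$ about $\hat\theta_{\rm MAP}$. Since the linear term vanishes, the exponent becomes $-\tfrac{1}{2}J_{bc}v^bv^c+\tfrac{1}{6}\phi_{n,bcd}v^bv^cv^d+\cdots$, where $J_{bc}:=-\partial_b\partial_c\phi_n(\hat\theta_{\rm MAP})$ is the positive definite observed information and $\phi_{n,bcd}:=\partial_{bcd}\phi_n(\hat\theta_{\rm MAP})$. Both are $O_p(n)$, with the prior contributing only $O_p(1)$ to each; on the posterior scale $v=O_p(n^{-1/2})$ the quadratic term is $O_p(1)$ and the cubic term is $O_p(n^{-1/2})$. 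Factoring out the Gaussian kernel $\exp(-\tfrac{1}{2}J_{bc}v^bv^c)$ and expanding the residual exponential as $1+\tfrac{1}{6}\phi_{n,bcd}v^bv^cv^d+O_p(n^{-1})$ reduces both integrals to Gaussian moment integrals with covariance $\Sigma:=J^{-1}$.

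Next I would evaluate these moments. The leading numerator integral $\int v^a\exp(-\tfrac{1}{2}Jvv)\,dv$ vanishes by oddness, so the first surviving contribution is the fourth moment,
\begin{equation*}
\frac{1}{6}\phi_{n,bcd}\int v^a v^b v^c v^d \exp\Big(-\tfrac{1}{2}J_{ef}v^ev^f\Big)\,dv
=\frac{1}{6}\phi_{n,bcd}\,(\Sigma^{ab}\Sigma^{cd}+\Sigma^{ac}\Sigma^{bd}+\Sigma^{ad}\Sigma^{bc})\,Z,
\end{equation*}
with $Z$ the Gaussian normalizer. Dividing by the denominator $Z(1+O_p(n^{-1}))$ and using the full symmetry of $\phi_{n,bcd}$ to collapse the three Wick contractions yields $\tfrac{1}{2}\phi_{n,bcd}\Sigma^{ab}\Sigma^{cd}+o_p(n^{-1})$. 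Because this whole term is already $O_p(n^{-1})$, I would then substitute $\Sigma^{ab}=\tfrac{1}{n}g^{ab}(\hat\theta_{\rm MAP})+O_p(n^{-3/2})$ (valid since $n^{-1}J_{ab}=g_{ab}(\hat\theta_{\rm MAP})+O_p(n^{-1/2})$ and the prior Hessian is $O_p(1)$) and $\phi_{n,bcd}=\sum_{t=1}^n\partial_{bcd}\log p(y(t)\,;\,\hat\theta_{\rm MAP})+O_p(1)$, each replacement perturbing the term only at order $o_p(n^{-1})$, to obtain
\begin{equation*}
\hat\theta_{\rm PM}^a-\hat\theta_{\rm MAP}^a
=\frac{1}{2n}g^{ab}(\hat\theta_{\rm MAP})g^{cd}(\hat\theta_{\rm MAP})\Big\{\frac{1}{n}\sum_{t=1}^n\partial_{bcd}\log p(y(t)\,;\,\hat\theta_{\rm MAP})\Big\}+o_p(n^{-1}).
\end{equation*}

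The hard part will be the uniform control of the remainder at $o_p(n^{-1})$, not the moment algebra. Specifically, I must show that the quartic Taylor term and the square of the cubic term contribute at most $O_p(n^{-1})$ to both integrals and cancel between numerator and denominator up to $o_p(n^{-1})$; that the tail region where $v$ is not $O_p(n^{-1/2})$ is negligible, which follows from the posterior concentration guaranteed by the regularity conditions in the Supplementary Material; and that the substitutions of observed information and likelihood derivatives by $g(\hat\theta_{\rm MAP})$ and the averaged third derivative are legitimate to the stated order. These are exactly the estimates furnished by the asymptotic-expansion apparatus already employed in the proof of Theorem \ref{thm_match}, so the corollary follows by isolating the $O_p(n^{-1})$ term of that expansion in the special case $\pi_{\rm PM}=\pi_{\rm MAP}=\pi$.
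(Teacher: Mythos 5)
Your proposal is correct, but it follows a genuinely different route from the paper. The paper derives Corollary \ref{cor_calibration} by recycling the two MLE-centered expansions already proved for Theorem \ref{thm_match}: it subtracts the MAP expansion (Lemma \ref{prop_map}) from the posterior-mean expansion (Lemma \ref{prop_post}), both anchored at $\hat{\theta}_{\rm MLE}$, then uses the Bartlett identity (\ref{eq:bartlett}) together with the $0$-parallel-prior identity $g^{cd}\partial_b g_{cd}=2\partial_b\log\pi_{\rm J}$ to recombine the Jeffreys-prior, skewness, and m-connection terms into $\tfrac{1}{2n}g^{ab}g^{cd}\mathrm{E}_{\theta}[\partial_{bcd}\bar{L}]$, and finally swaps the expectation for the empirical third derivative and $\hat{\theta}_{\rm MLE}$ for $\hat{\theta}_{\rm MAP}$ via the law of large numbers; setting $\pi_{\rm PM}=\pi_{\rm MAP}$ kills the remaining prior-ratio term. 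You instead perform a single Laplace (Tierney--Kadane-style) expansion of the posterior mean centered directly at $\hat{\theta}_{\rm MAP}$, exploiting that the gradient of the full log-posterior vanishes there, and extract the correction from the Wick contraction of the cubic term against the fourth Gaussian moment. Your route is more self-contained and arguably more natural for this statement: it never touches the MLE, the Bartlett identity, or the information-geometric bookkeeping, and it produces the formula directly in terms of observed quantities at the mode (which is exactly how the corollary is stated). What it does not buy is the connection the paper's derivation makes explicit: in the paper's proof the term $\tfrac{1}{n}g^{ab}\partial_b\log(\pi_{\rm PM}/\pi_{\rm MAP})$ survives for unequal priors, exhibiting the corollary and the matching condition (\ref{eq:match}) as two faces of the same computation. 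Two minor points: your claim that the quartic and squared-cubic terms ``cancel between numerator and denominator'' is looser than needed --- in the numerator they multiply $v^a$ into odd moments and vanish identically, while in the denominator they only perturb the normalizer by $1+O_p(n^{-1})$, which is harmless since the leading numerator term is already $O_p(n^{-1})$; and your expansion formally requires third and fourth derivatives of $\log\pi$, slightly more smoothness than condition A5, though these terms are $O_p(1)$ against $O_p(n)$ and so are immaterial, and the paper's own Lemma \ref{prop_post} makes a comparable demand.
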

This calibration formula is derived from several ingredients of the proof of the main theorem. 
It can also be obtained through the asymptotic expansion in \cite{miyata2004} (see also \citealp{yanagimoto2023}); however, it has not been used for our specific purpose, that is, calibrating the posterior expectation and the MAP. In practice, the higher-order derivatives in the formula can be efficiently computed using automatic differentiation (cf. \citealp{Iri1984, baydinetal2018}). The supplementary material checks the validity of this calibration.
}

We conclude this section with the following extension of  matching prior pairs, that is, matching prior pairs for other statistics including higher-order moments. 
\begin{prop}
\label{prop_moment_general}
Let $f(\theta)=(f_{1}(\theta),\ldots,f_{d}(\theta))$ be a third-times differentiable function.
If two priors $\pi_{\rm PM}$ and $\pi_{\rm MAP}$ satisfy
\begin{align*}
\partial_a  f_{i} (\theta) \left\{\partial_b\log\frac{\pi_{\rm PM}}{\pi_{\rm MAP}}(\theta)-
\partial_{b}\log \pi_{\rm J}(\theta)
-\frac{1}{2}g^{cd}(\theta)
\eGamma{cdb}{}(\theta)
\right\}
-\frac{1}{2}\partial_a\partial_b f_{i}(\theta)
= \mathrm{o}_p(n^{-1})
\end{align*}
at $\theta=\hat{\theta}_{\rm MLE}$
for $i=1,\ldots,d$ and $a=1,\ldots,d$,
the posterior expectation $f_{\pi_{\rm PM}}$ of $f$ 
based on $\pi_{\rm PM}$
and the MAP-plugged-in estimate $f(\hat{\theta}_{\rm MAP})$ 
based on $\pi_{\rm MAP}$
coincide except for $\mathrm{o}_p(n^{-1})$-terms.
\end{prop}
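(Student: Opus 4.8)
The plan is to generalize the proof of Theorem~\ref{thm_match} from the coordinate map $\theta\mapsto\theta$ to a general smooth $f$, the only genuinely new ingredient being that the posterior expectation of a nonlinear statistic differs from that statistic evaluated at the posterior mean by a curvature term of order $n^{-1}$. Concretely, I would compare $\mathrm{E}_{\pi_{\rm PM}}[f_i(\theta)\mid y^n]$ with $f_i(\hat\theta_{\rm MAP})$ by expanding both to order $\mathrm{o}_p(n^{-1})$ about the maximum likelihood estimate $\hat\theta_{\rm MLE}$, and then reading off the condition under which the two expansions agree.

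The first step is a delta-method (Laplace) expansion of the posterior mean of $f$. Writing $\hat\theta_{\rm PM}$ for the posterior mean of $\theta$ under $\pi_{\rm PM}$ and expanding $f_i$ about $\hat\theta_{\rm PM}$,
\[
\mathrm{E}_{\pi_{\rm PM}}[f_i(\theta)\mid y^n]=f_i(\hat\theta_{\rm PM})+\tfrac12\,\partial_a\partial_b f_i(\hat\theta_{\rm PM})\,\mathrm{Cov}(\theta^a,\theta^b\mid y^n)+\mathrm{o}_p(n^{-1}),
\]
where the posterior covariance equals $n^{-1}g^{ab}(\hat\theta_{\rm MLE})+\mathrm{o}_p(n^{-1})$; the third-order delta term is $O_p(n^{-2})$ and is exactly what the assumed third-differentiability of $f$ lets me bound. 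Second, since both $\hat\theta_{\rm PM}$ and $\hat\theta_{\rm MAP}$ lie within $O_p(n^{-1})$ of $\hat\theta_{\rm MLE}$, Taylor expansion reduces $f_i(\hat\theta_{\rm PM})-f_i(\hat\theta_{\rm MAP})$ to its linear part $\partial_a f_i(\hat\theta_{\rm MLE})\,(\hat\theta_{\rm PM}^a-\hat\theta_{\rm MAP}^a)$, the quadratic remainders being $\mathrm{o}_p(n^{-1})$. Third, I substitute the estimate-difference expansion extracted from the same asymptotic expansions that prove Theorem~\ref{thm_match} (equivalently, obtainable from the third-derivative expansion behind Corollary~\ref{cor_calibration}),
\[
\hat\theta_{\rm PM}^a-\hat\theta_{\rm MAP}^a=\frac1n\,g^{ab}(\hat\theta_{\rm MLE})\Big[\partial_b\log\tfrac{\pi_{\rm PM}}{\pi_{\rm MAP}}-\partial_b\log\pi_{\rm J}-\tfrac12 g^{cd}\eGamma{cdb}{}\Big]_{\hat\theta_{\rm MLE}}+\mathrm{o}_p(n^{-1}),
\]
which is precisely the vector form of condition \eqref{eq:match}. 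Collecting the curvature term from the first step and the linearized shift from the second and third steps, the difference $\mathrm{E}_{\pi_{\rm PM}}[f_i(\theta)\mid y^n]-f_i(\hat\theta_{\rm MAP})$ becomes a sum of two $O(n^{-1})$ contributions contracted by $g^{ab}$; requiring this to vanish to $\mathrm{o}_p(n^{-1})$ yields the matching condition, and the pointwise (per-index) form displayed in the Proposition is a sufficient condition for it. As a sanity check, taking $f=\mathrm{id}$ gives $\partial_a f_i=\delta^i_a$ and $\partial_a\partial_b f_i=0$, recovering exactly \eqref{eq:match}.

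The main obstacle is the bookkeeping of the curvature term at the correct order: I must verify that only the leading-order posterior covariance $n^{-1}g^{-1}$ enters (its $O(n^{-2})$ corrections multiply the $O(1)$ Hessian and are negligible), that all derivatives of $f$, $g$, $\pi_{\rm J}$ and $\eGamma{cdb}{}$ may be evaluated at the common point $\hat\theta_{\rm MLE}$ up to $\mathrm{o}_p(n^{-1})$, and that the normalization of this covariance fixes the coefficient and sign of the $\partial_a\partial_b f_i$ term in the final condition. Everything else is a direct reuse of the asymptotic expansions underlying Theorem~\ref{thm_match}, valid under the regularity conditions assumed throughout the paper.
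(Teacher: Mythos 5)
Your strategy is sound and, modulo packaging, is the same as the paper's: the paper proves the Proposition by upgrading Lemmas \ref{prop_post} and \ref{prop_map} to expansions of $(f_{\pi})_i$ and $f_i(\hat\theta_{\rm MAP})$ about $\hat\theta_{\rm MLE}$ (its Lemmas \ref{prop_post_general} and \ref{prop_map_general}) and subtracting; your route --- delta method about $\hat\theta_{\rm PM}$ with posterior covariance $n^{-1}g^{ab}(\hat\theta_{\rm MLE})+\mathrm{o}_p(n^{-1})$, then linearizing $f_i(\hat\theta_{\rm PM})-f_i(\hat\theta_{\rm MAP})$ and inserting the estimate-difference expansion behind Theorem \ref{thm_match} --- reproduces exactly those two lemmas, and each of your three ingredients is valid at the stated orders.

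The gap is the step you yourself flagged as ``the main obstacle'' and then never carried out: fixing the sign of the $\partial_a\partial_b f_i$ term and verifying that your collected condition is the one displayed in the Proposition. It is not. Collecting your terms gives
\begin{align*}
(f_{\pi_{\rm PM}})_i - f_i(\hat\theta_{\rm MAP})
= \frac{g^{ab}}{n}\left[\partial_a f_i\left\{\partial_b\log\frac{\pi_{\rm PM}}{\pi_{\rm MAP}}-\partial_b\log\pi_{\rm J}-\frac{1}{2}g^{cd}\eGamma{cdb}{}\right\}
+\frac{1}{2}\partial_a\partial_b f_i\right]_{\theta=\hat\theta_{\rm MLE}} + \mathrm{o}_p(n^{-1}),
\end{align*}
so the sufficient per-index condition carries $+\tfrac{1}{2}\partial_a\partial_b f_i$, i.e.\ the \emph{same} sign as the bracket, whereas the Proposition as printed has $-\tfrac{1}{2}\partial_a\partial_b f_i$. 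These are genuinely different conditions; they coincide only when the Hessian term is negligible (e.g.\ affine $f$), which is exactly why your sanity check $f=\mathrm{id}$ cannot detect the discrepancy. A concrete test: for $N(\theta,1)$, $f(\theta)=\theta^2$, $\pi_{\rm PM}\equiv 1$, the $+$ version prescribes $\pi_{\rm MAP}\propto|\theta|^{1/2}$, giving $\hat\theta_{\rm MAP}^2\approx \bar y^{\,2}+1/n=\mathrm{E}[\theta^2\mid y^n]$, while the printed $-$ version prescribes $\pi_{\rm MAP}\propto|\theta|^{-1/2}$, giving $\hat\theta_{\rm MAP}^2\approx \bar y^{\,2}-1/n$, off by $2/n$. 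So your derivation, completed carefully, is correct and agrees with the paper's own Lemmas \ref{prop_post_general}--\ref{prop_map_general} (which also produce the $+$ sign); it is the Proposition's displayed condition that carries a sign error. But your proof as written asserts without verification that your collected condition ``is'' the displayed one, and that assertion is false. To close the argument you must perform the contraction explicitly, state the sign-corrected condition, and note that the printed statement needs this correction.
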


\subsection{Examples}
\label{sec:examples}
In this section, 
we present matching prior pairs (\ref{eq:matchingprior}) in a submodel of an exponential family:
\[
\{
\exp\{\theta^{\top}(\xi)T(y)-\psi(\theta(\xi))\}m(y) \mid \xi\in\Xi
\},
\]
where $\theta$ is the $p$-dimensional canonical/natural parameter of the exponential family,
and $\xi$ is a $d$-dimensional model parameter.
This includes the exponential family itself and the generalized linear regression model with a canonical link function.
As the exponential family is e-\&m-flat and has $\alpha$-parallel priors,
their e- or m-flat submodels also have 
$\alpha$-parallel priors
and so we confine ourselves to e- or m-flat submodels of an exponential family.

\subsubsection{Generalized linear models with canonical links and regression coefficients}
We first consider a generalized linear regression model (GLM) with a canonical link function:
\[
\{
\exp(\theta^{\top}(\beta)T(y)-\psi(\theta(\beta)))m(y) \mid \beta\in\Xi
\}
\,\,\,\,\text{with}\,\,
\theta=X\beta,
\]
where
$\beta$ is an unknown regression coefficient of $d$ dimension ($p\ge d$),
and
$X$ is a given full rank matrix $X\in\mathbb{R}^{p\times d}$.
In this case,
since
$\theta$ is the e-affine coordinate and
$\partial \theta/\partial \beta=X^{\top}$,
the e-connection coefficient with respect to $\beta$ also vanishes
$\eGamma{abc}{}(\beta)=0$,
it suffices to seek the prior pair satisfying
\begin{align}
\partial_a\log\frac{\pi_{\rm PM}}{\pi_{\rm MAP}}(\beta)
-
\left(
\partial_{a}\log \pi_{\rm J}(\beta)
\right)
=0,
\end{align}
which is equal to
\begin{align}
\label{eq:e-flat}
\frac{\pi_{\rm PM}(\beta)}{\pi_{\rm MAP}(\beta)}
\propto \pi_{\rm J}(\beta).
\end{align}

As a simple example,
consider a Gaussian model with mean zero and unknown variance: the data $y^n=\{y(1),\dots,y(n)\}$ independently come from a Gaussian distribution $N(0,\sigma^2)$.
Here we employ an inverse-gamma prior \[
\sigma^2\sim{\rm InvGamma}(a,b)
\quad\text{with}\,\, a,b>0.
\]
Consider the canonical parameter $\theta=\sigma^{-2}$
and the posterior mean of $\theta$.
In this parameterization,
the prior becomes 
\[\theta\sim{\rm Gamma}(a,b).
\]
Let $\pi_{\rm PM}$ denote its density.
The posterior distribution of $\theta$ is ${\rm Gamma}(a+n/2, b+\sum\nolimits_{i=1}^n y(i)^2/2)$,
and then
the posterior mean based on $\pi_{\rm PM}$ is
\[
\hat{\theta}_{\rm PM} = \frac{a+n/2}{b+\sum\nolimits_{i=1}^n y(i)^2/2}.
\]
By using (\ref{eq:e-flat}), we set $\pi_{\rm MAP}(\theta) \propto \pi_{\rm PM}(\theta)/\pi_{\rm J}(\theta)$, where $\pi_{\rm J}(\theta) \propto \theta^{-1}$.
Since the log posterior density based on $\pi_{\rm MAP}$ is 
\[
\log p(y^n\,;\,\theta) + \log\pi_{\rm MAP}(\theta)
= \left(a + n/2 -1 \right) \log\theta - (b+\sum\nolimits_{i=1}^n y(i)^2/2)\theta + \log\theta + C
\]
with the constant $C$ independent from $\theta$,
 the MAP estimate based on $\pi_{\rm MAP}$ is
\[
\hat{\theta}_{\rm MAP} = \frac{a+n/2}{b+\sum\nolimits_{i=1}^n y(i)^2/2},
\]
which implies the exact matching 
$\hat{\theta}_{\rm PM} = \hat{\theta}_{\rm MAP}$.

\subsubsection{m-flat submodels and m-affine parameters}
We proceed to a linear submodel with respect to the expectation parameter $\eta=\mathrm{E}_{\theta}[T(Y)]$ of the exponential family:
\[
\{
\exp(\theta^{\top}(\xi)T(y)-\psi(\theta(\xi)))m(y) \mid \xi\in\Xi
\}
\,\,\,\,\text{with}\,\,
\eta=\eta(\theta)=X\xi,
\]
where
$\eta\in\mathbb{R}^p$,
$\xi$ is a model parameter of $d$ dimension $(p\ge d)$,
and
$X$ is a given full rank matrix $X\in\mathbb{R}^{p\times d}$.
In this case, the m-connection coefficients $\mGamma{abc}{}(\xi)$ ($a,b,c=1,\ldots,d$) vanish and 
\[
\partial_{a}\log \pi_{\rm J}(\xi)=\zeroGamma{ab}{b}(\xi)=\left(\mGamma{ab}{b}(\xi)-\frac{1}{2}T_{a}(\xi)\right)=-\frac{1}{2}T_{a}(\xi)
=-\frac{1}{2}\partial_{a}\log \frac{\pi_{\rm m}(\xi)}{\pi_{\rm e}(\xi)},
\]
which implies 
\[
\frac{\pi_{\rm e}(\xi)}{\pi_{\rm m}(\xi)}\propto \left\{\pi_{\rm J}(\xi)\right\}^{2}.
\]
Then, the condition (\ref{eq:matchingprior}) becomes
\begin{align}
\label{eq:m-flat}
\frac{\pi_{\rm PM}(\xi)}{\pi_{\rm MAP}(\xi)} \propto \pi_{\rm J}(\xi)\left\{
(\pi_{\rm J}(\xi))^{2}\right\}^{1/2}
=\left\{\pi_{\rm J}(\xi)\right\}^{2}.
\end{align}

As a simple example, consider a Poisson model:
the data $y^n=\{y(1),\dots,y(n)\}$ independently come from a Poisson distribution ${\rm Poisson}(\lambda)$. 
Let us consider a Gamma prior $\lambda\sim{\rm Gamma}(a,b)$ with $a,b>0$ for $\lambda$ and denote the density by $\pi_{\rm PM}$.
Then the posterior distribution is the Gamma distribution ${\rm Gamma}(a+\sum\nolimits_{i=1}^n y(n), b+n)$,
and 
the posterior mean based on $\pi_{\rm PM}$ is
\[
\hat{\lambda}_{\rm PM} = \frac{a+\sum\nolimits_{i=1}^n y(n)}{b+n}.
\]
Since $\lambda$ is the expectation parameter and $\pi_{\rm J}(\lambda)\propto \lambda^{-1/2}$, 
the matching prior pair satisfies
\[
\frac{\pi_{\rm PM}(\lambda)}{\pi_{\rm MAP}(\lambda)} \propto \lambda^{-1}.
\]
We set $\pi_{\rm MAP}(\lambda) = \lambda \pi_{\rm PM}(\lambda)$.
Since 
the log posterior density based on $\pi_{\rm MAP}$ is
\[
\log p(y^n\,;\,\lambda) + \log\pi_{\rm MAP}(\lambda)
= \left(a + \sum_{i=1}^n y(i)-1 \right) \log\lambda - (b + n)\lambda + \log\lambda + C
\]
with the constant $C$ independent from $\lambda$,
 the MAP estimate based on $\pi_{\rm MAP}$ is
\[
\hat{\lambda}_{\rm MAP} = \frac{a+\sum\nolimits_{i=1}^n y(n)}{b+n},
\]
which implies the exact matching
$\hat{\lambda}_{\rm PM} = \hat{\lambda}_{\rm MAP}$.

\section{Numerical experiments}
\label{sec:experiments}

In this section, we examine the theory using the Bayesian logistic regression model and the Poisson shrinkage model.

\subsection{The Bayesian Logistic regression}

\begin{figure}[h]
    \centering
    \includegraphics[width=110mm]{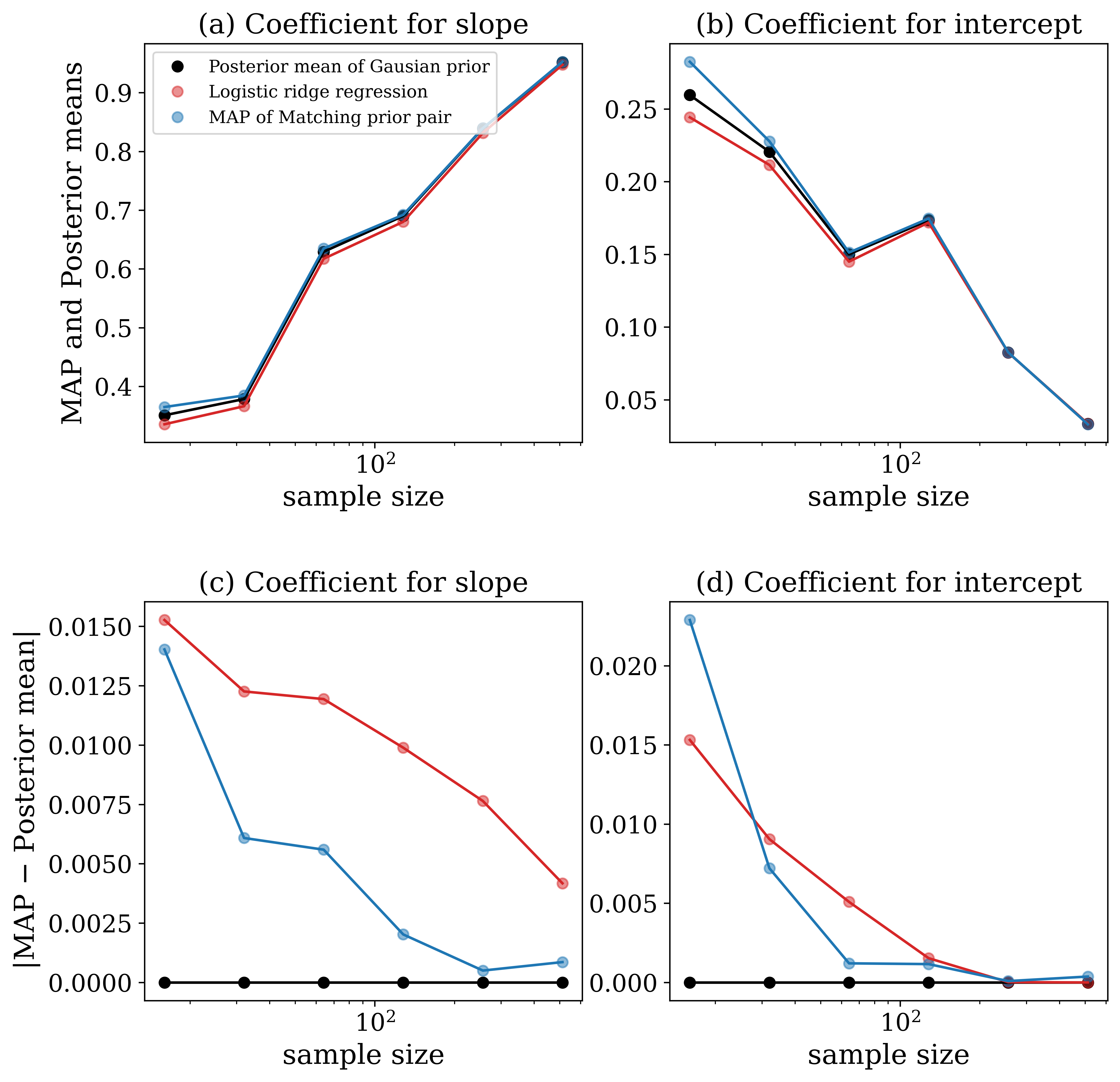}
    \caption{\revise{The posterior mean based on the Gaussian prior (colored in black), the logistic ridge regression (colored in red), and the MAP estimate based on the matching prior pair (colored in blue) under the correctly specified logistic regression model. (a)-(b) estimates themselves; (c)-(d) the differences with respect to the posterior mean based on the Gaussian prior. The horizontal axis adopts a logarithmic scale.}}
    \label{fig:Logisticregression_synthetic_scenario1}
\end{figure}

\begin{figure}[h]
    \centering
    \includegraphics[width=100mm]{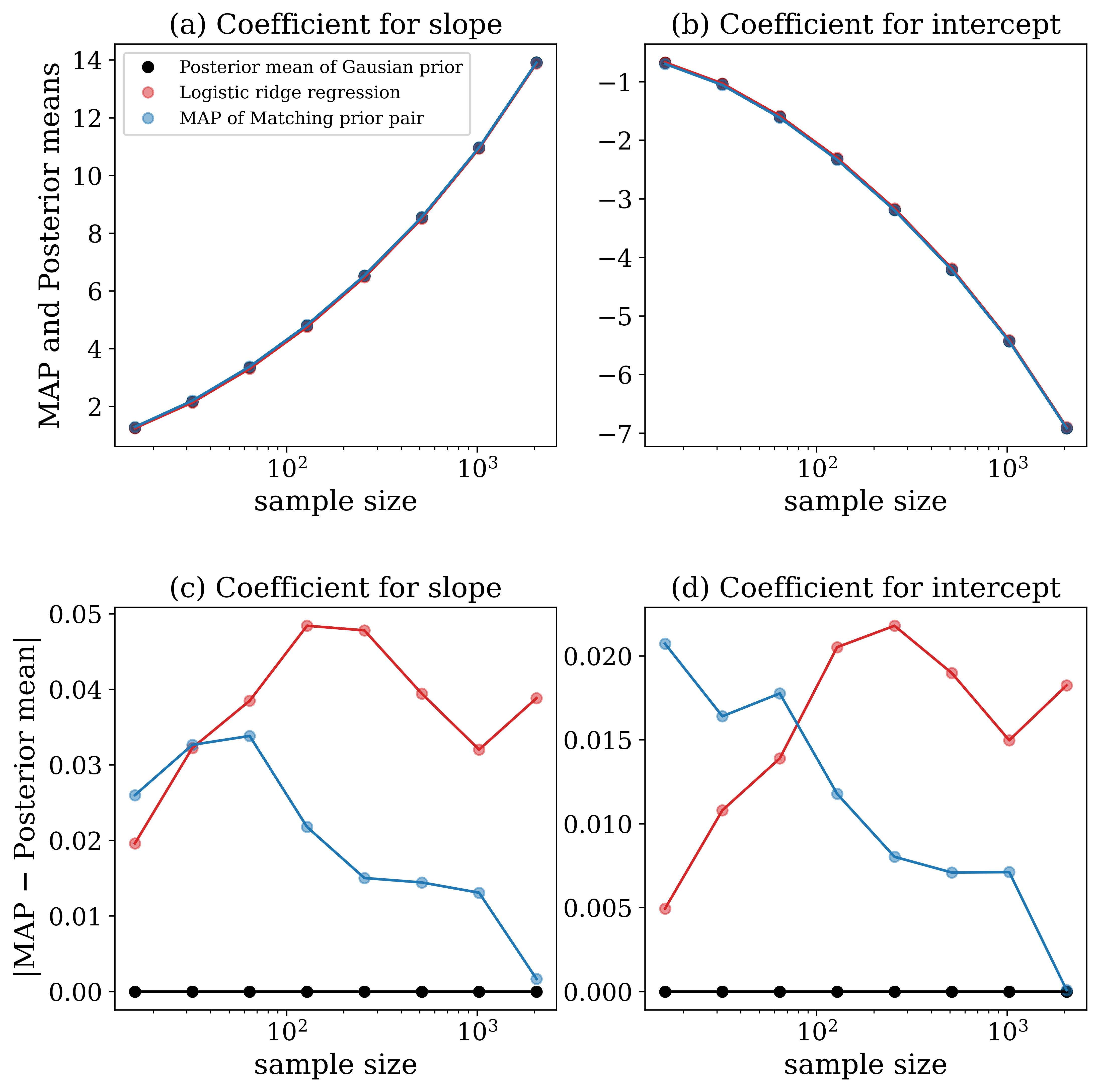}
    \caption{
    \revise{The posterior mean based on the Gaussian prior (colored in black), the logistic ridge regression (colored in red), and the MAP estimate based on the matching prior pair (colored in blue) under the misspecified logistic regression model. (a)-(b) estimates themselves; (c)-(d) the differences with respect to the posterior mean based on the Gaussian prior. The horizontal axis adopts a logarithmic scale.}}
    \label{fig:Logisticregression_synthetic_scenario2}
\end{figure}

\begin{figure}[h]
    \centering
    \includegraphics[width=110mm]{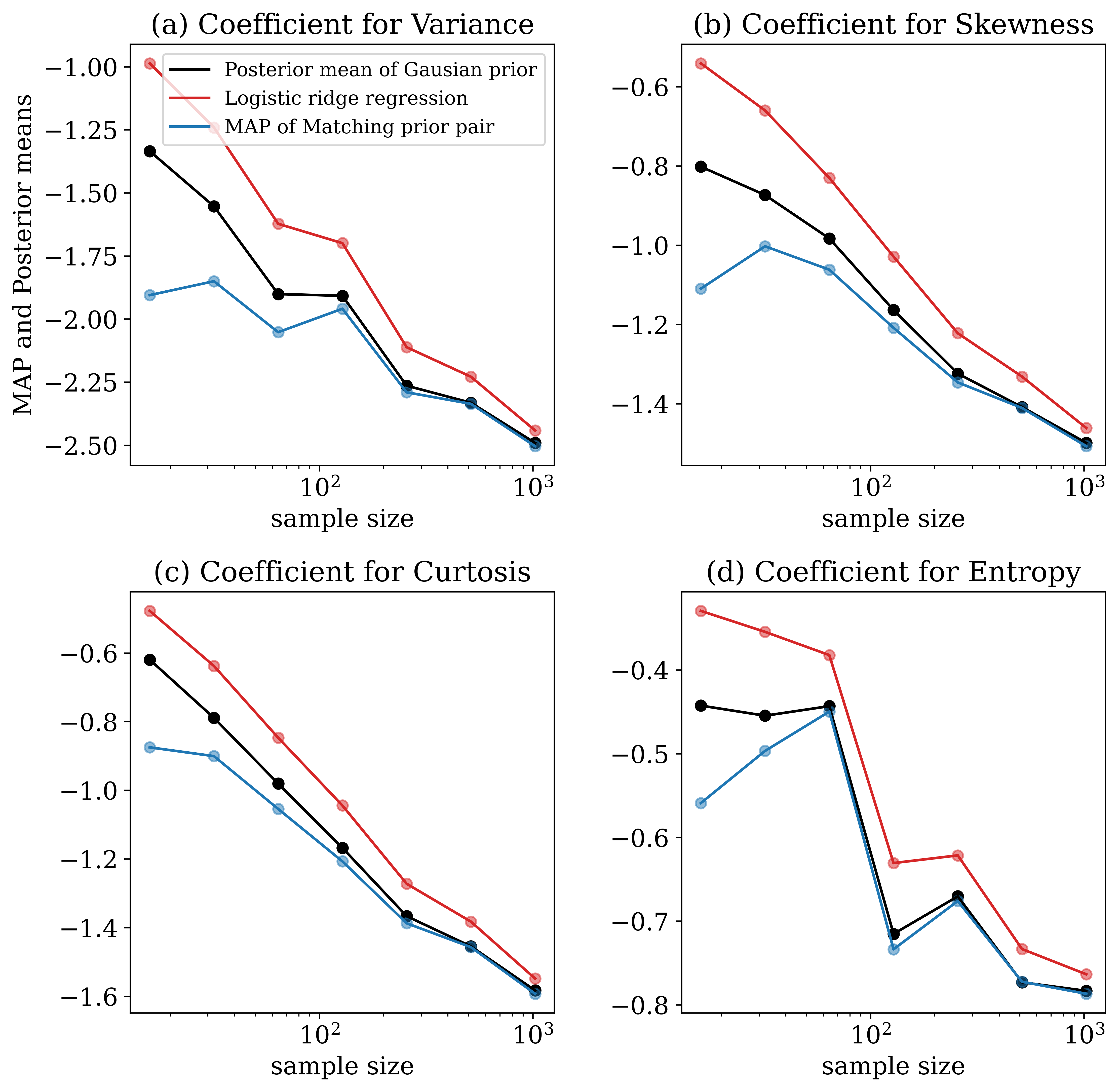}
    \caption{
    \revise{The Posterior mean based on the Gaussian prior (colored in black),  the logistic ridge regression (colored in red), and the MAP estimate based on the matching prior pair (colored in blue) for the banknote authentification data.}}
\label{fig:Logisticregression_real_bank}
\end{figure}

Bayesian logistic regression model is a Bayesian version of the popular logistic regression model.
By putting a Gaussian prior on regression coefficients,
the working model becomes
\begin{align*}
Y(i) \mid X(i) \,,\, \beta &\sim 
\mathrm{Bernoulli}(\sigma(X(i)\beta) )\,\, (i=1,\ldots,n),
\\
\beta &\sim \mathrm{Normal}(0,I),
\end{align*}
where $\sigma(x)=1/\{1+\exp(-x)\}$.
This Bayesian model has been sometimes related to the logistic ridge regression:
\begin{align*}
\argmax_{\beta} \left\{\sum_{i=1}^{n} l(y(i),x(i)\,;\,\beta)  - \frac{\|\beta\|^{2}}{2}\right\}
\end{align*}
with $l(y(i),x(i)\,;\,\beta):=y(i)\log \sigma(x(i)\beta)+(1-y(i))\log \{1-\sigma(x(i)\beta)\}$.
Our theory tells a gap between the posterior mean based on the Gaussian prior $\pi_{\mathrm{PM}}(\beta)$ and
the logistic ridge regression (the MAP estimate based on $\pi_{\mathrm{PM}}$).
Also, the matching prior pair (\ref{eq:match}) gives another prior $\pi_{\mathrm{MAP}}$ yielding the MAP estimate asymptotically equal to the posterior mean of $\pi_{\mathrm{PM}}$
\begin{align*}
\pi_{\mathrm{MAP}}(\beta)\propto \left|\sum_{i=1}^{n}(X(i))^{\top}X(i)\left[\sigma(X(i)\beta)\{1-\sigma(X(i)\beta)\}\right]
\right|^{-1/2}\pi_{\mathrm{PM}}(\beta),
\end{align*}
inducing the following optimization:
\begin{align*}
\argmax_{\beta} \sum_{i=1}^{n} l(y(i),x(i);\beta)  - \frac{\|\beta\|^{2}}{2} - \frac{1}{2}\log \left| \sum_{i=1}^{n}(X(i))^{\top}X(i)\left[\sigma(X(i)\beta)\{1-\sigma(X(i)\beta)\}\right] \right|.
\end{align*}

First, we check the behaviour of the matching prior pair by using the following two synthetic data:\revise{
\begin{align*}
X^{(1)}(i)&=i/n,\\
Y^{(1)}(i)\mid X^{(1)}(i) &\sim
\mathrm{Bernoulli}(\sigma(X^{(1)}(i)+0.0)),
\end{align*}
}
and
\begin{align*}
(Y^{(2)}(i),X^{(2)}(i))=
\begin{cases}
(1,i/n) \,\,\text{if}\,\, i > n/2, \\
(0,i/n) \,\,\text{if otherwise}.
\end{cases}
\end{align*}
The logistic regression model with 2-dimensional parameters (slope and intercept) applied to the former data $\{(Y^{(1)}(i),X^{(1)}(i))\,:\,i=1,\ldots,n\}$ is correctly-specified,
while the model applied to the latter data $\{(Y^{(2)}(i),X^{(2)}(i))\,:\,i=1,\ldots,n\}$ is misspecified.
The former data is random and so we take the mean of the performance using 50 repetitions. For the calculation of the posterior mean, we use the 10000 Markov chain Monte Carlo samples after the 10000 burnin samples by conducting the P\'{o}lya-Gamma augmentation (\citealp{polson2016}).
We vary the sample size 
in $\{2^{t}: t=4,5,6,7,8,9\}$ for the former case
and 
in $\{2^{t}: t=4,5,6,7,8,9,10,11\}$
for the latter case, respectively.

Figures \ref{fig:Logisticregression_synthetic_scenario1} and \ref{fig:Logisticregression_synthetic_scenario2} display the results. From Figure \ref{fig:Logisticregression_synthetic_scenario1}, we see that under the correctly-specified model, the gap between the logistic ridge regression and the posterior mean based on the Gaussian prior is larger than the gap between the MAP estimate based on the matching prior pair and the posterior mean based on the Gaussian prior. Both gaps become smaller as the sample size gets larger. Figure \ref{fig:Logisticregression_synthetic_scenario2} showcases the performance under the misspecified model. The performance with small sample sizes seems random but with moderate or large sample sizes, the MAP estimate based on the matching prior pair gets closer to the posterior mean based on the Gaussian prior.
In both cases, there exists a gap between the logistic ridge regression and the posterior mean based on the Gaussian prior, and the matching prior pair reduces this gap.

\revise{
We further examine the computational time for obtaining 
the posterior mean and
the MAP estimates based on the matching prior pair in the synthetic dataset as in Figure \ref{fig:Logisticregression_synthetic_scenario1}. 
We calculate the mean and standard deviation using 10 repetitions. Table \ref{tab: computational time Logistic} indicates that
the optimization is relatively fast compared to the MCMC algorithm, particularly in regimes with large sample sizes. Therefore, the MAP estimate based on the matching prior pair can serve as a useful approximation of the posterior mean while providing fast computational times.
} 
\begin{table}[h]
\label{tab: computational time Logistic}
\caption{\revise{Computational time for obtaining 
the posterior mean and
the MAP estimates based on the matching prior pair. The set-up is the same as in Figure \ref{fig:Logisticregression_synthetic_scenario1}.}}
\begin{center}
\begin{tabular}{|r|c|c|}
\hline
  Sample size $n$ & Posterior mean & MAP of Matching prior pair \\\hline
  $2^4$ & 4.72 s ($\pm 0.07$ s) & 0.01 s ($\pm 0.00$ s) \\
  $2^5$ & 5.13 s ($\pm 0.05$ s) & 0.01 s ($\pm 0.00$ s) \\
  $2^6$ & 6.12 s ($\pm 0.07$ s)& 0.02 s ($\pm 0.00$ s)\\
  $2^7$ & 8.15 s ($\pm 0.08$ s) & 0.05 s ($\pm 0.00$ s)
  \\
  $2^8$ & 12.7 s ($\pm 0.07$ s)& 0.08 s ($\pm 0.01$ s)\\
  $2^9$ & 45.8 s ($\pm 2.72$ s) & 0.17 s ($\pm 0.00$ s)\\
  \hline
\end{tabular}
\end{center}
\end{table}

Next, we check the performance of the matching prior pair by the banknote authentication data from UCI Machine Learning Repository
(\citealp{UCI}).
The banknote authentication data set classifies genuine and forged banknote-like specimens based on four image features (Variance, Skewness, Curtosis, and Entropy).
The number of unknown parameters in this case is 4.
We check the performance for sample sizes of $\{2^{t}: t=4,5,6,7,8,9,10\}$.
For each sample size, we take indices randomly taken 50 times and then take the average of the performance.

Figure \ref{fig:Logisticregression_real_bank} displays the result. For all four variables, the matching prior pair reduces the gap with respect to the posterior mean based on the Gaussian prior except for small sample sizes. Although there seem some biases for the coefficients of Skewness and Curtosis, we note that there exist deviations from the theoretical values of the posterior means due to the randomness in MCMC.
Overall, the calibration based on the matching prior pair works well for the logistic regression model.

\subsection{The Poisson shrinkage model}
Poisson sequence model is a canonical model for count-data analysis. 
Recently, incorporating the high-dimensional structure with Poisson sequence model has been well investigated 
(\citealp{komaki2004,datta2016,yanokanekokomaki2021,hamuraetal2022}).

\begin{figure}[h]
    \centering
\includegraphics[width=110mm]{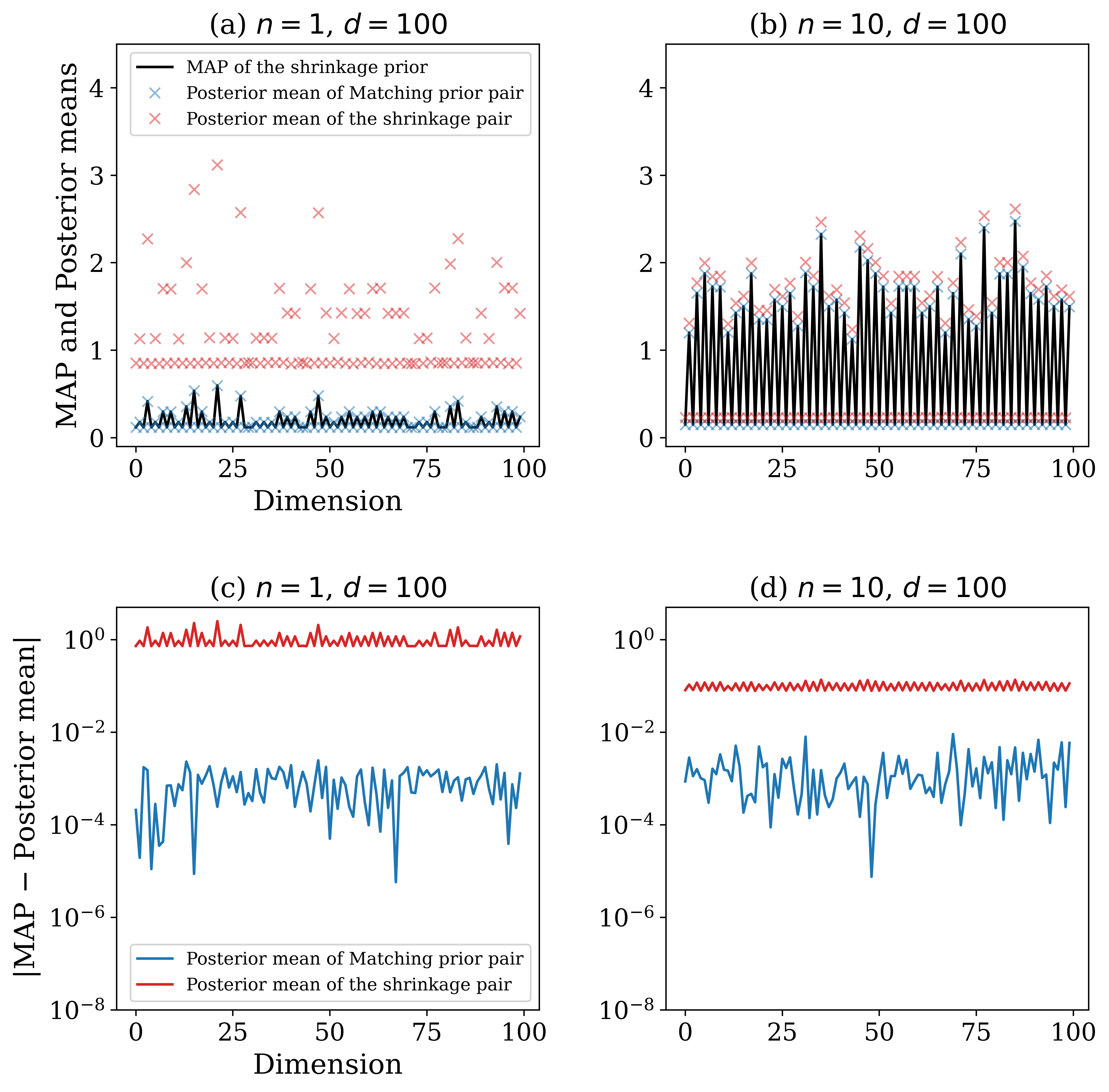}
    \caption{\revise{The MAP estimate based on the shrinkage prior (colored in black), the posterior mean based on the shrinkage prior (colored in red), and the posterior mean based on the matching prior pair (colored in blue) under the synthetic Poisson sequence model.}}
    \label{fig:Poisson_syn}
\end{figure}
\begin{figure}[h]
    \centering
    \includegraphics[width=105mm]{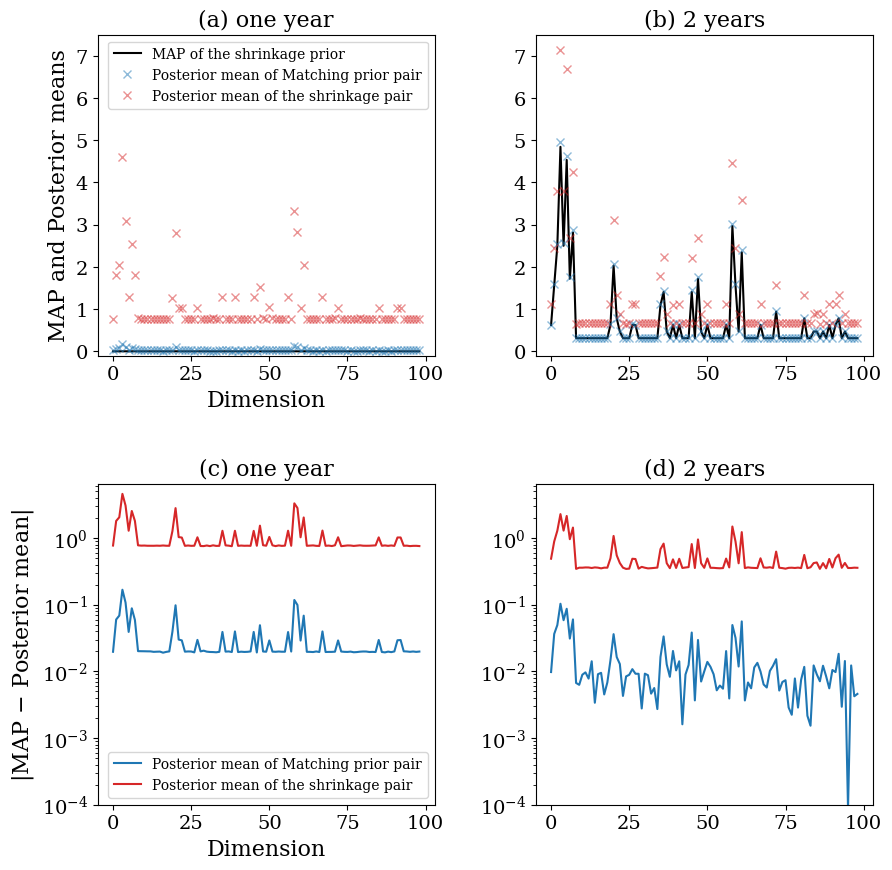}
    \caption{The MAP estimate based on the shrinkage prior (colored in black), the posterior mean based on the shrinkage prior (colored in red), and the posterior mean based on the matching prior pair (colored in blue) for pickpockets in Chuo ward, Tokyo, Japan. \revise{(a) estimates using only data for the entire year of 2012; (b) estimates using data from the two-year period of 2012-2013; (c)-(d) the differences with respect to the MAP estimate based on the shrinkage prior.}}
    \label{fig:Poisson_real_chuo}
\end{figure}

\revise{
The working model here is 
\[
Y(t)=\begin{pmatrix}
Y_{1}(t)\\
Y_{2}(t)\\
\vdots\\
Y_{d}(t)
\end{pmatrix} \quad \mid \quad \lambda =  \begin{pmatrix}
\lambda_{1}\\
\lambda_{2}\\
\vdots\\
\lambda_{d}
\end{pmatrix}
\quad\sim\quad
\otimes_{i=1}^{d} \mathrm{Poisson}(\lambda_{i})
\quad (t=1,\ldots,n),
\]
where $t$ is an index for the observation, $i$ is an index for the coordinate,
and $\otimes$ denotes the product of measures.
In this model,
each observation $Y(t)$
given $\lambda$
follows from the $d$-dimensional independent Poisson distribution.
In the application to spatio-temporal count-data analysis,
$t$ may be the index for the year and $i$ may be the index for the observation site such as a district; see \cite{datta2016,yanokanekokomaki2021,hamuraetal2022} for the details.
}

\revise{We investigate the calibration based on the matching prior pair in high-dimension and under an improper prior. We work with the improper shrinkage prior proposed by \cite{komaki2006}:
\begin{align*}
\pi(\lambda)&= \frac{\lambda_{1}^{\beta_{1}-1}\cdots\lambda_{d}^{\beta_{d}-1}}{(\lambda_{1}+\cdots+\lambda_{d})^{\alpha}},
\end{align*}
where $\alpha>0$ and $\beta=(\beta_{1},\ldots,\beta_{d})$.
The reason of the prior choice is as follows.
The optimization in finding the MAP estimate based on this prior is a bit tricky due to the singularity around $\lambda=0$, while 
we can easily access the posterior expectation 
as the efficient Gibbs sampling algorithm is available.
So, the matching prior pair can offer useful surrogates of the MAP estimate.
The number of dimension $d$ is $100$ for synthetic data analysis and is $99$ for real data analysis, respectively.
We set $\beta=(3,\ldots,3)$ and $\alpha=\sum_{j=1}^{d}\beta_{j}-1$.
In order to avoid the singularity issue in the optimization finding the MAP estimate, 
we restrict the parameter space to $[10^{-3},\infty)^{d}$ on the basis of the try-and-error.
}

We begin with displaying the numerical experiment using the following synthetic data:
\revise{
\begin{align*}
Y(t)=\begin{pmatrix}
Y_{1}(t)\\
Y_{2}(t)\\
\vdots\\
Y_{100}(t)
\end{pmatrix} \quad \mid \quad \lambda =  \begin{pmatrix}
\lambda_{1}\\
\lambda_{2}\\
\vdots\\
\lambda_{100}
\end{pmatrix}
\quad&\sim\quad
\otimes_{i=1}^{100} \mathrm{Poisson}(\lambda_{i})
\quad (t=1,\ldots,n),\\
\lambda_{j}&= 
\begin{cases}
0.001 &\text{if}\,\,j \,\,\text{is odd},\\
 2 &\text{if otherwise}
\end{cases}
\quad
(j=1,\ldots,100).
\end{align*}
}
In this experiment, we display the result for one realization because the result is not so much dependent on realization.
For the calculation of the posterior mean, we use 10000 MCMC samples.

Figure \ref{fig:Poisson_syn} showcases the MAP estimate based on the shrinkage prior (colored in black), 
the posterior mean based on the shrinkage prior (colored in red), 
and the posterior mean based on the matching prior pair (colored in blue).
From \revise{(c)-(d)} of Figure \ref{fig:Poisson_syn},
we see that the posterior mean based on the matching prior pair of the shrinkage prior can get closer to the MAP estimate based on the shrinkage prior than that based on the shrinkage prior.
Surprisingly, even for high dimensional cases such as $(n,d)=(1,100)$ and $(n,d)=(10,100)$,
the matching prior pair works well.
One of potential reasons for this success in high dimension is that a Laplace approximation of the posterior distribution might still work in certain high-dimensional set-ups (e.g., \citealp{panov2015,yano2020,Kasprzaki2023}).
\revise{Further,
we measure the computational time (CPU times) for the MAP estimate and the posterior expectation based on the matching prior pair.
In this example, due to the singularity issue and the high-dimensionality,
the optimization for the MAP estimate
is relatively slow compared to the MCMC algorithm as in Table \ref{tab: computational time Poisson}. Thus, this implies that the Bayesian computation using the matching prior pair can offer a good surrogate for the MAP estimate if we have an efficient MCMC algorithm and
the optimization is slow or difficult.
}
\begin{table}[h]
\label{tab: computational time Poisson}
\caption{\revise{Computational time for obtaining the MAP estimates and the posterior expectation based on the matching prior pair. The set-up is the same as in Figure \ref{fig:Poisson_syn}.}}
\begin{center}
\begin{tabular}{|r|c|c|}
\hline
  Sample size $n$ & MAP & Posterior measn based on Matching prior pair \\\hline
  $1$ & 6.38 s & 0.07 s\\
  $10$ & 3.53 s& 0.06 s\\
  $100$ & 5.12 s& 0.06 s\\
  $1000$ & 12.5 s& 0.06 s\\ \hline
\end{tabular}
\end{center}
\end{table}

We proceed to an application to Japanese pickpocket data from \cite{pickpocket}. This data reports the total numbers of pickpockets in each year in Tokyo Prefecture, and are classified by town and also by the type of crimes. We use pickpocket data from \revise{2012 to 2013} at 99 towns in Chuo ward. 
We work with the Poisson sequence model ($d=99$, $n\le 2$; $n$ is the number of years we use in the analysis) and report how the matching prior pair calibrates the shrinkage prior so as to get the posterior mean closer to the MAP estimate based on the shrinkage prior. For the calculation of the posterior mean, we use 10000 MCMC samples.

Figure \ref{fig:Poisson_real_chuo} showcases the MAP estimate based on the shrinkage prior $\pi$ (colored in black), the posterior mean based on the shrinkage prior $\pi$ (colored in red), and the posterior mean based on the matching prior pair of $\pi$ (colored in blue) for pickpockets in Chuo ward, Tokyo, Japan.
Figure \ref{fig:Poisson_real_chuo} shows that for pickpocket data,
the MAP estimate and the posterior mean based on the same shrinkage prior are different although the difference gets smaller as the sample size becomes larger,
and
the matching prior pair successfully yields the posterior mean closer to the MAP estimate based on the improper shrinkage prior.

\section{Proofs}
\label{sec:proofs}

This section provides the proof of the main results.

\begin{proof}[Proof of Theorem \ref{thm_match}]

The proof employs the following asymptotic expansions for a posterior mean and a MAP estimate. 
\begin{lem}
\label{prop_post}
The posterior mean of $\theta$ based on a prior $\pi_{\rm PM}(\theta)$ is expanded as
\begin{align}
\label{eq:post}
\hat{\theta}_{\rm PM}^a
=
\hat{\theta}_{\rm MLE}^a
&+\frac{{g}^{ab}(\hat{\theta}_{\rm MLE})}{n}
\left(\partial_b\log\frac{\pi_{\rm PM}}{{\pi}_{\rm J}}(\hat{\theta}_{\rm MLE})
+\frac{T_b(\hat{\theta}_{\rm MLE})}{2}\right)
\nonumber\\
&
+\frac{{g}^{bc}(\hat{\theta}_{\rm MLE})}{2n}\left(-\mGamma{bc}{~a}(\hat{\theta}_{\rm MLE})\right)
+\mathrm{o}_p(n^{-1})
\quad(a=1,\ldots,d).
\end{align}
\end{lem}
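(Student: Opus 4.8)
The plan is to establish (\ref{eq:post}) through a Laplace-type (Edgeworth) analysis of the posterior localized at the MLE, followed by a translation of the resulting sample quantities into the information-geometric objects of Section~\ref{sec:marchingpriorpairs}. Write $\ell_n(\theta)=\sum_{t=1}^n\log p(y(t)\,;\,\theta)$, so that the posterior density is proportional to $\exp\{\ell_n(\theta)\}\pi_{\rm PM}(\theta)$, and localize through the change of variables $u=\sqrt{n}\,(\theta-\hat{\theta}_{\rm MLE})$. Since $\hat{\theta}_{\rm MLE}$ solves the likelihood equation, the linear term of $\ell_n$ at $\hat{\theta}_{\rm MLE}$ vanishes, and Taylor expansion represents the localized log-posterior (up to an additive constant) as
\[
-\tfrac{1}{2}\hat{g}_{bc}\,u^bu^c
+\frac{1}{\sqrt{n}}\Big(\tfrac{1}{6}K_{bcd}\,u^bu^cu^d+\partial_b\log\pi_{\rm PM}(\hat{\theta}_{\rm MLE})\,u^b\Big)
+O_p(n^{-1}),
\]
where $\hat{g}_{bc}=-n^{-1}\partial_b\partial_c\ell_n(\hat{\theta}_{\rm MLE})$ is the observed information per datum and $K_{bcd}=n^{-1}\partial_b\partial_c\partial_d\ell_n(\hat{\theta}_{\rm MLE})$.

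The leading factor is Gaussian in $u$ with covariance $\Sigma^{ab}$, the inverse of $\hat{g}_{bc}$, so the posterior mean satisfies $\hat{\theta}_{\rm PM}^a=\hat{\theta}_{\rm MLE}^a+n^{-1/2}\mathrm{E}^{*}[u^a]$, where $\mathrm{E}^{*}$ denotes expectation under the localized posterior of $u$. The zeroth-order Gaussian mean vanishes by symmetry, so the leading contribution comes from the $n^{-1/2}$ correction above. Writing the mean as a ratio of integrals, expanding the exponentiated correction to first order, and using Wick's formula for the second and fourth Gaussian moments together with the total symmetry of $K_{bcd}$, one obtains
\[
\mathrm{E}^{*}[u^a]
=\frac{1}{\sqrt{n}}\Big(\tfrac{1}{2}K_{bcd}\,\Sigma^{ab}\Sigma^{cd}
+\Sigma^{ab}\partial_b\log\pi_{\rm PM}(\hat{\theta}_{\rm MLE})\Big)+o_p(n^{-1/2}).
\]
Crucially only this $n^{-1/2}$ term matters: the $O_p(n^{-1})$ terms in the exponent perturb $\mathrm{E}^{*}[u^a]$ by $O_p(n^{-1})$, hence $\hat{\theta}_{\rm PM}^a$ by $O_p(n^{-3/2})=o_p(n^{-1})$.

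It remains to replace $\Sigma$ and $K$ by population quantities at $\hat{\theta}_{\rm MLE}$. By consistency and the law of large numbers, $\hat{g}_{bc}=g_{bc}(\hat{\theta}_{\rm MLE})+o_p(1)$ and $K_{bcd}=\mathrm{E}_{\hat{\theta}_{\rm MLE}}[\partial_b\partial_c\partial_d\log p]+o_p(1)$; because each is multiplied by a factor $n^{-1}$, these $o_p(1)$ errors are absorbed into the $o_p(n^{-1})$ remainder, and $\Sigma^{ab}=g^{ab}(\hat{\theta}_{\rm MLE})+o_p(1)$. Differentiating the identity $\int p\,\partial_b\partial_c\log p\,dy=-g_{bc}$ in $\theta^d$ yields the third-derivative relation $\mathrm{E}_{\theta}[\partial_b\partial_c\partial_d\log p]=-\partial_d g_{bc}-\eGamma{bcd}{}$. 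Substituting this together with $\Sigma=g^{-1}$, and then invoking $\partial_d g_{bc}=\mGamma{dbc}{}+\eGamma{dcb}{}$ (the $\alpha=-1$ case of the duality (\ref{eq: dual structure})), the decomposition $\mGamma{bcd}{}=\eGamma{bcd}{}+T_{bcd}$, the symmetry of $\eGamma{}{}$ in its first two indices, and the total symmetry of $T_{bcd}$, the quadratic term $\tfrac12 K_{bcd}\Sigma^{ab}\Sigma^{cd}$ rearranges exactly into $g^{ab}\big(-\partial_b\log\pi_{\rm J}+\tfrac12 T_b\big)-\tfrac12 g^{bc}\mGamma{bc}{~a}$ at $\hat{\theta}_{\rm MLE}$. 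Combining with the prior term $g^{ab}\partial_b\log\pi_{\rm PM}$ then produces (\ref{eq:post}).

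The main obstacle is not this index algebra but the rigorous justification of the Laplace step: one must show that the contributions to the numerator and denominator integrals from outside a shrinking neighborhood of $\hat{\theta}_{\rm MLE}$ are $o_p(n^{-1})$, and that the Taylor remainders are uniformly negligible there. This is precisely where the regularity conditions enter, namely posterior concentration at $\hat{\theta}_{\rm MLE}$, integrability and tail control of $\pi_{\rm PM}$ against the localized Gaussian, and sufficient differentiability of $\ell_n$ and $\pi_{\rm PM}$. Granting these, the expansion above is valid and the substitution of population quantities is legitimate, which completes the argument.
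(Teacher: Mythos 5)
Your proposal is correct and follows essentially the same route as the paper: a Laplace expansion of the posterior localized at $\hat{\theta}_{\rm MLE}$ (your log-posterior-plus-Wick bookkeeping is just a repackaging of the paper's term-by-term Gaussian-moment integration), followed by the identical information-geometric rearrangement via the Bartlett identity $\mathrm{E}_{\theta}[\partial_{bcd}\bar{L}]=-\partial_b g_{cd}-\eGamma{cdb}{}$, the Jeffreys-prior relation $g^{cd}\partial_b g_{cd}=2\partial_b\log\pi_{\rm J}$, and $T_{abc}=\mGamma{abc}{}-\eGamma{abc}{}$. Your index algebra and remainder accounting check out, and your closing caveat about tail control and uniform Taylor remainders is exactly where the paper invokes its regularity conditions.
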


\begin{lem}
\label{prop_map}
The MAP estimate of $\theta$ based on a prior $\pi_{\rm MAP}(\theta)$ is expanded as
\begin{align}
\label{eq:map}
\hat{\theta}_{\rm MAP}^a
=
\hat{\theta}_{\rm MLE}^a
+\frac{{g}^{ab}(\hat{\theta}_{\rm MLE})}{n}
\partial_b\log{\pi_{\rm MAP}}(\hat{\theta}_{\rm MLE})
+\mathrm{o}_p(n^{-1})
\quad (a=1,\ldots,d).
\end{align}
\end{lem}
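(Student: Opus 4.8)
The plan is to derive the expansion from the first-order optimality condition for the mode, expanded around the MLE. Writing the log-likelihood as $\ell_n(\theta)=\sum_{t=1}^n\log p(y(t)\,;\,\theta)$, the MAP estimate satisfies $\partial_a\ell_n(\hat{\theta}_{\rm MAP})+\partial_a\log\pi_{\rm MAP}(\hat{\theta}_{\rm MAP})=0$ for $a=1,\ldots,d$, whereas the MLE satisfies $\partial_a\ell_n(\hat{\theta}_{\rm MLE})=0$. Setting $\delta:=\hat{\theta}_{\rm MAP}-\hat{\theta}_{\rm MLE}$, the first step is to record that, under the assumed regularity conditions, both estimators are consistent and $\delta=\mathrm{O}_p(n^{-1})$; this rate is forced by the stationarity condition, in which the prior score $\partial_a\log\pi_{\rm MAP}=\mathrm{O}_p(1)$ must be balanced against the log-likelihood Hessian, which is of order $n$.

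Next I would Taylor-expand the log-likelihood score at the MLE,
\[
\partial_a\ell_n(\hat{\theta}_{\rm MAP})=\partial_a\ell_n(\hat{\theta}_{\rm MLE})+\partial_a\partial_b\ell_n(\hat{\theta}_{\rm MLE})\,\delta^b+\tfrac12\partial_a\partial_b\partial_c\ell_n(\hat{\theta}_{\rm MLE})\,\delta^b\delta^c+\cdots,
\]
where the first term vanishes because $\hat{\theta}_{\rm MLE}$ solves the score equation. For the Hessian I would invoke the law of large numbers in the form $-\tfrac1n\partial_a\partial_b\ell_n(\hat{\theta}_{\rm MLE})=g_{ab}(\hat{\theta}_{\rm MLE})+\mathrm{O}_p(n^{-1/2})$. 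Substituting into the MAP stationarity condition and replacing $\partial_a\log\pi_{\rm MAP}(\hat{\theta}_{\rm MAP})$ by $\partial_a\log\pi_{\rm MAP}(\hat{\theta}_{\rm MLE})$ up to an $\mathrm{O}_p(n^{-1})$ error gives, to the relevant order,
\[
n\,g_{ab}(\hat{\theta}_{\rm MLE})\,\delta^b=\partial_a\log\pi_{\rm MAP}(\hat{\theta}_{\rm MLE})+\mathrm{O}_p(n^{-1/2}).
\]
Inverting the Fisher information then yields $\delta^a=\tfrac1n g^{ab}(\hat{\theta}_{\rm MLE})\,\partial_b\log\pi_{\rm MAP}(\hat{\theta}_{\rm MLE})+\mathrm{O}_p(n^{-3/2})$, which is the claimed expansion since $\mathrm{O}_p(n^{-3/2})=\mathrm{o}_p(n^{-1})$.

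The step I expect to be the crux is the bookkeeping of error orders: one must verify that every neglected term contributes only at $\mathrm{o}_p(n^{-1})$ to $\delta$. The two terms deserving attention are the stochastic fluctuation of the observed information and the third-order Taylor term. Using $\delta=\mathrm{O}_p(n^{-1})$ together with $\partial_a\partial_b\partial_c\ell_n=\mathrm{O}_p(n)$, these enter the stationarity condition at orders $\mathrm{O}_p(n^{-1/2})$ and $\mathrm{O}_p(n^{-1})$ respectively, so after dividing through by the $\mathrm{O}_p(n)$ Fisher information they perturb $\delta$ only at orders $\mathrm{O}_p(n^{-3/2})$ and $\mathrm{O}_p(n^{-2})$, both negligible. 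Compared with the posterior-mean expansion of Lemma \ref{prop_post}, this argument is markedly simpler: the mode is a pointwise maximizer, so no Laplace- or Edgeworth-type correction terms such as those involving $T_b$ or $\mGamma{bc}{~a}$ appear, and the prior influences $\hat{\theta}_{\rm MAP}$ solely through its first-order score.
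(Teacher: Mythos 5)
Your proof is correct and takes essentially the same route as the paper's: Taylor-expand the MAP stationarity condition around $\hat{\theta}_{\rm MLE}$, kill the leading term with the score equation, replace the observed information by $-g_{ab}(\hat{\theta}_{\rm MLE})$ via the law of large numbers with an $\mathrm{O}_p(n^{-1/2})$ fluctuation, and invert the Fisher information. Your explicit recording of the rate $\delta=\mathrm{O}_p(n^{-1})$ and of the third-order Taylor term simply makes visible what the paper absorbs into its $\mathrm{O}_p(\|\delta\|^2)$ and $\mathrm{O}_p(\|\delta\|/\sqrt{n})$ remainders, so the two arguments are the same in substance.
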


The proofs of these lemmas are given right after the main proof.

These lemmas give the following condition under which
the posterior mean $\hat{\theta}_{{\rm PM}}$ (\ref{eq:post}) and the MAP estimate $\hat{\theta}_{\rm MAP}$ (\ref{eq:map}) coincide except for $o_p(n^{-1})$ terms:
for $a=1,\ldots,d$,
\begin{align*}
&\frac{{g}^{ab}(\hat{\theta}_{\rm MLE})}{n}
\left(\partial_b\log\frac{\pi_{\rm PM}}{{\pi}_{\rm J}}(\hat{\theta}_{\rm MLE})+\frac{T_b(\hat{\theta}_{\rm MLE})}{2}\right)
+\frac{{g}^{bc}(\hat{\theta}_{\rm MLE})}{2n}\left(-\mGamma{bc}{~a}(\hat{\theta}_{\rm MLE})\right)\\
&=
\frac{{g}^{ab}(\hat{\theta}_{\rm MLE})}{n}
\partial_b\log{\pi_{\rm MAP}}(\hat{\theta}_{\rm MLE})+\mathrm{o}_p(n^{-1}).
\end{align*}
This is rewritten as
\begin{align}
&{{g}^{ab}(\hat{\theta}_{\rm MLE})}
\partial_b\log\frac{\pi_{\rm PM}}{{\pi}_{\rm MAP}}
(\hat{\theta}_{\rm MLE})
\nonumber\\
&= {g}^{ab}(\hat{\theta}_{\rm MLE})\left(
\partial_b\log{\pi_{\rm J}}(\hat{\theta}_{\rm MLE})
- \frac{T_b(\hat{\theta}_{\rm MLE})}{2}\right) 
+ \frac{{g}^{bc}(\hat{\theta}_{\rm MLE})}{2}\mGamma{bc}{~a}(\hat{\theta}_{\rm MLE})+\mathrm{o}_p(n^{-1})\nonumber\\
&= {g}^{ab}(\hat{\theta}_{\rm MLE})\left[
\partial_b\log{\pi_{\rm J}}
(\hat{\theta}_{\rm MLE})- \frac{1}{2}\left\{\mGamma{cdb}{}(\hat{\theta}_{\rm MLE})-\eGamma{cdb}{}(\hat{\theta}_{\rm MLE})\right\}g^{cd}(\hat{\theta}_{\rm MLE})\right] \nonumber\\
&\quad + \frac{{g}^{bc}(\hat{\theta}_{\rm MLE})}{2}\mGamma{bc}{~a}(\hat{\theta}_{\rm MLE})+\mathrm{o}_p(n^{-1})\nonumber\\
&= {g}^{ab}(\hat{\theta}_{\rm MLE})
\partial_b\log{\pi_{\rm J}}
(\hat{\theta}_{\rm MLE})+ \frac{{g}^{bc}(\hat{\theta}_{\rm MLE})}{2}\eGamma{bc}{~a}(\hat{\theta}_{\rm MLE})+\mathrm{o}_p(n^{-1}),
\label{eq: geometric condition}
\end{align}
where the second identity follows since $T_{abc}=\mGamma{abc}{}-\eGamma{abc}{}$.
This completes the proof.
%
%
%
%
%
%
%
%
%
%
\end{proof}

\begin{proof}[Proof of Lemma \ref{prop_post}]

In the proof, we consider an approximation of the posterior expectation of arbitrary third-times differentiable function $f:\Theta\to\mathbb{R}$.
Setting $f(\theta)=\theta^{a}$ ($a=1,\ldots,d$) gives the approximation of the posterior mean of $\theta$.
The following proof of Lemma \ref{prop_post} proceeds closely following the proof of Theorem I\hspace{-1.2pt}I\hspace{-1.2pt}I.1 in \cite{okudo2021}. 
The first step is to employ the Laplace approximation of integrals to get an approximation of the posterior expectation.
The second step is to arrange terms in information-geometrical notations.

\textbf{Step 1: Laplace approximation}.
Observe that the posterior expectation of a third-times differentiable function $f(\theta)$ based on a prior $\pi(\theta)$ is written as
\begin{align*}
f_{\pi}(y^n)=\frac{\int f(\theta) p(y^n\,;\,\theta)\pi(\theta)d\theta}{\int p(y^n\,;\,\theta)\pi(\theta)d\theta}
= \frac{\int f(\theta) \exp(n\bar{L}(\theta))\pi(\theta)d\theta}{\int \exp(n\bar{L}(\theta)) \pi(\theta)d\theta},
\end{align*}
where $\bar{L}(\theta)=(1/n)\sum_{t=1}^{n}\log p(y(t)\,;\,\theta)$.
We approximate this using the Laplace method (e.g., Theorem  4.6.1 of \cite{kass1997}
and \cite{tierney1986}).
Consider an expansion of $\pi(\theta)\exp(n\bar{L}(\theta))$ around $\theta=\hat{\theta}_{\rm MLE}$.
In the following, 
for any function $g(\theta)$,
we abbreviate the value $g(\hat{\theta}_{\rm MLE})$ to $\hat{g}$;
e.g., $\hat{\pi}:=\pi(\hat{\theta}_{\rm MLE})$.
By rescaling $\theta$ as $\theta = \hat{\theta}_{\rm MLE} +\phi/\sqrt{n}$, 
we get
\begin{align}
&\pi(\theta)\exp(n\bar{L}(\theta))\nonumber\\
&=
\left(
\hat{\pi} + \frac{(\partial_a\hat{\pi}) \phi^a}{\sqrt{n}} + \frac{(\partial_{ab}\hat{\pi})\phi^a\phi^b}{2n}
+ \frac{(\partial_{abc}\hat{\pi})\phi^a\phi^b\phi^c}{6n\sqrt{n}}+\mathrm{O}_p(n^{-2})
\right)\nonumber\\
&~~~\times\exp
\left(
n\hat{L} +\frac{(\partial_{ab}\hat{L})\phi^a\phi^b}{2}
+ \frac{(\partial_{abc}\hat{L})\phi^a\phi^b\phi^c}{6\sqrt{n}} + \frac{(\partial_{abcd}\hat{L})\phi^a\phi^b\phi^c\phi^d}{24{n}} 
\right.\nonumber\\
&~~~~~~~~~~~~~~~~~~~~~~~~~~~~~~~~~~~~~~~~~~~~~~~~~~~~~~~~~~~
\left.+\frac{(\partial_{abcde}\hat{L})\phi^a\phi^b\phi^c\phi^d\phi^e}{120n\sqrt{n}}+\mathrm{O}_p(n^{-2})
\right)\nonumber\\
&=
\hat{\pi}e^{n\hat{L}}e^{(\partial_{ab}\hat{L})\phi^a\phi^b/2} \left(
1 + \frac{(\partial_a \hat{\pi}) \phi^a}{\hat{\pi}\sqrt{n}} + \frac{(\partial_{ab}\hat{\pi})\phi^a\phi^b}{2 \hat{\pi}n}
 + \frac{(\partial_{abc}\hat{\pi})\phi^a\phi^b\phi^c}{6 \hat{\pi}n\sqrt{n}}+\mathrm{O}_p(n^{-2})
\right)\nonumber\\
&~~~\times
\left(
1 +  \frac{(\partial_{abc}\hat{L})\phi^a\phi^b\phi^c}{6\sqrt{n}} + \frac{(\partial_{abcd}\hat{L})\phi^a\phi^b\phi^c\phi^d}{24{n}}  
\right.\nonumber\\
&~~~~~~~~~~~~~~~~~~~~~~~~~~~~~~~~~~~~~ 
\left. + \frac{(\partial_{abc}\hat{L})(\partial_{a'b'c'}\hat{L})\phi^a\phi^b\phi^c\phi^{a'}\phi^{b'}\phi^{c'}}{72{n}}
+\mathrm{O}_p(n^{-3/2})
\right)\nonumber\\
&=
\hat{\pi}e^{n\hat{L}}e^{-\hat{J}_{ab}\phi^a\phi^b/2}\left(
1+\frac{(\partial_a \hat{\pi}) \phi^a}{\hat{\pi}\sqrt{n}} 
{+\frac{(\partial_{abc}\hat{L})\phi^a\phi^b\phi^c}{6\sqrt{n}}} + \frac{(\partial_{ab}\hat{\pi})\phi^a\phi^b}{2\hat{\pi}n}
\right.\nonumber\\
&~~~~~~\left.
+ \frac{(\partial_a \hat{\pi}) (\partial_{bcd}\hat{L})\phi^a\phi^b\phi^c\phi^d}{6\hat{\pi}n}\right.\nonumber\\
&~~~~~~\left. + \frac{(\partial_{abc}\hat{\pi})\phi^a\phi^b\phi^c}{6\hat{\pi}n\sqrt{n}}
+ \frac{(\partial_{ab}\hat{\pi})(\partial_{cde}\hat{L})\phi^a\phi^b\phi^c\phi^d\phi^e}{12\hat{\pi}n\sqrt{n}}
+ \frac{(\partial_a \hat{\pi})(\partial_{bcde}\hat{L}) \phi^a\phi^b\phi^c\phi^d\phi^e}{24\hat{\pi}n\sqrt{n}}\right.\nonumber\\
&~~\left. + \frac{(\partial_a \hat{\pi})(\partial_{bcd}\hat{L})(\partial_{efg}\hat{L}) \phi^a\phi^b\phi^c\phi^d\phi^e\phi^f\phi^g}{72\hat{\pi}n\sqrt{n}}
{+ \frac{C_1}{n}} + \mathrm{O}_p(n^{-2})
\right),
\label{eq: asymptotic expansion of piexp}
\end{align}
where the second equation follows from $\exp(x)=1+x+O(x^{2})$,
and in the third equation, we denote $-\partial_{ab}\hat{L}$ by
$\hat{J}_{ab}$ and
denote terms not depending on $\pi$ and $n$ by $C_1$,
respectively.

Next, we integrate both sides of (\ref{eq: asymptotic expansion of piexp}) with respect to $\theta$.
Let $(\hat{J}^{ab})$ be the inverse matrix of $(\hat{J}_{ab})$.
By changing the variables from $\theta$ to $\phi$, and by using the formula of moments of multivariate Gaussian distributions, we obtain
\begin{align*}
&\int \pi(\theta)\exp(n\bar{L}(\theta)) d\theta\\
&=
C_2\hat{\pi}\left(
1
+ \frac{(\partial_{ab}\hat{\pi})}{2\hat{\pi}n}
\int \phi^a\phi^b e^{-\hat{J}_{cd}\phi^c\phi^d/2} d\phi
+ \frac{(\partial_a \hat{\pi}) (\partial_{bcd}\hat{L})}{6\hat{\pi}n}
\int \phi^a\phi^b\phi^c\phi^d e^{-\hat{J}_{ef}\phi^e\phi^f/2} d\phi
\right.\\
&\left.\quad\quad\quad\quad + \frac{C_1}{n} 
+ \mathrm{O}_p(n^{-2})
\right)
\\
&= C_2\hat{\pi}
\left(
1+\frac{(\partial_{ab}\hat{\pi})\hat{J}^{ab}}{2\hat{\pi}n}
+ \frac{(\partial_a \hat{\pi}) (\partial_{bcd}\hat{L})(\hat{J}^{ab}\hat{J}^{cd}+\hat{J}^{ac}\hat{J}^{bd}+\hat{J}^{ad}\hat{J}^{bc})}{6\hat{\pi}n} + {\frac{C_1}{n}} 
+ \mathrm{O}_p(n^{-2}) \right)\\
&= C_2\hat{\pi}
\left(
1+\frac{(\partial_{ab}\hat{\pi})\hat{J}^{ab}}{2\hat{\pi}n}+ \frac{(\partial_a \hat{\pi}) (\partial_{bcd}\hat{L})\hat{J}^{ab}\hat{J}^{cd}}{2\hat{\pi}n} + {\frac{C_1}{n}}
+ \mathrm{O}_p(n^{-2})
\right),
\end{align*}
where $C_2$ is a constant not depending on $\pi$ and $n$.
Replacing $\pi(\theta)$ by $f(\theta)\pi(\theta)$ for an arbitrary third-times differentiable function $f:\mathbb{R}^d\to \mathbb{R}$, 
we have
\begin{align*}
&\int f(\theta) \pi(\theta)\exp(n\bar{L}(\theta)) d\theta\\
&= C_2\hat{f}\hat{\pi}
\left(
1+\frac{\{\partial_{ab}(\hat{f}\hat{\pi})\}\hat{J}^{ab}}{2\hat{f}\hat{\pi}n}
+ \frac{\{\partial_a(\hat{f}\hat{\pi})\} (\partial_{bcd}\hat{L})\hat{J}^{ab}\hat{J}^{cd}}{2\hat{f}\hat{\pi}n}
+ {\frac{C_1}{n}}
 + \mathrm{O}_p(n^{-2})
\right).
\end{align*}
Therefore, the posterior expectation of $f(\theta)$ is expanded as
\begin{align}
f_{\pi}&=\frac{\int f(\theta) \exp(n\bar{L}(\theta))\pi(\theta)d\theta}{\int \exp(nL(\theta)) \pi(\theta)d\theta}\nonumber\\
&=
\left.C_2\hat{f}\hat{\pi}
\left(
1+\frac{\partial_{ab}(\hat{f}\hat{\pi})\hat{J}^{ab}}{2\hat{f}\hat{\pi}n}
+ \frac{\{\partial_a(\hat{f}\hat{\pi})\} (\partial_{bcd}\hat{L})\hat{J}^{ab}\hat{J}^{cd}}{2\hat{f}\hat{\pi}n} + {\frac{C_1}{n}}
 + \mathrm{O}_p(n^{-2})
\right) \right.\nonumber\\
&~~~\left. \middle/ 
C_2\hat{\pi}
\left(
1+\frac{(\partial_{ab}\hat{\pi})\hat{J}^{ab}}{2\hat{\pi}n} + \frac{(\partial_a \hat{\pi}) (\partial_{bcd}\hat{L})\hat{J}^{ab}\hat{J}^{cd}}{2\hat{\pi}n}
 + {\frac{C_1}{n}}
 + \mathrm{O}_p(n^{-2})
\right) \right.\nonumber\\
&= \hat{f}
\left(
1+\frac{\hat{J}^{ab}}{2n}\left(\frac{\partial_{ab}(\hat{f}\hat{\pi})}
{\hat{\theta}_i\hat{\pi}}-\frac{\partial_{ab}\hat{\pi}}{\hat{\pi}} \right)
+\frac{\hat{J}^{ab}\hat{J}^{cd}\partial_{bcd}\hat{L}}{2n}
\left(  \frac{\partial_a(\hat{f}\hat{\pi})}{\hat{\theta}\hat{\pi}}-\frac{\partial_a \hat{\pi}}{\hat{\pi}} \right) +\mathrm{O}_p(n^{-2})
\right)\nonumber\\
&= \hat{f} +
\frac{\hat{J}^{ab}}{2n}\left(\partial_{ab}\hat{f} +
\frac{2 (\partial_a \hat{f}) (\partial_b \hat{\pi})}{\hat{\pi}} \right)
+\frac{\hat{J}^{ab}\hat{J}^{cd}\partial_{bcd}\hat{L}}{2n} \partial_a \hat{f}
+\mathrm{O}_p(n^{-2}).
\label{eq: Laplace approximation of posterior expectation}
\end{align}
This completes Step 1.

\textbf{Step 2: Rearrangement using the information-geometric notations}.
The law of large numbers yields 
$\hat{J}_{ab}=\hat{g}_{ab}+o_p(1)$ and 
$\partial_{bcd}\hat{L}=\mathrm{E}_{\theta}[\partial_{bcd}\hat{L}]+o_p(1)$.
The Bartlett identity gives
\begin{align}
\label{eq:bartlett}
\mathrm{E}_{\theta}[\partial_{bcd}\bar{L}] = -\partial_b g_{cd} (\theta) -\eGamma{cdb}{}(\theta)=-\partial_b g_{cd} (\theta)-\mGamma{cdb}{}(\theta)+T_{bcd}(\theta).
\end{align}
Together with the definition of 0-parallel prior
$g^{cd}\partial_b g_{cd}=\partial_b \log(|g|)=2\partial_b \log \pi_{\rm J}$,
these give the following representation of the approximated posterior expectation of $f$:
\revise{
\begin{align*}
f_{\pi}
&= \hat{f} +
\frac{\hat{g}^{ab}}{2n}\left( \partial_{ab}\hat{f} + 2 \partial_a \hat{f} \partial_b \log \hat{\pi} \right)\\
&\quad\quad +\frac{\hat{g}^{ab}}{2n}\left( -2\partial_b \log \hat{\pi}_J - \hat{g}^{cd}\mGamma{cdb}{}(\hat{\theta}_{\rm MLE})+\hat{T}_{b}
\right) \partial_a \hat{f}
+\mathrm{o}_p(n^{-1}) \\
&=\hat{f} +
\frac{\hat{g}^{ab}}{2n}
\left( \partial_{ab}\hat{f} - \mGamma{ab}{~c}(\hat{\theta}_{\rm MLE}) \partial_c \hat{f} \right)
+\frac{\hat{g}^{ab}}{n}
\left( \partial_b \log \frac{\hat{\pi}}{\hat{\pi}_J} + \frac{\hat{T}_{b}}{2}
\right)\partial_a \hat{f}
+\mathrm{o}_p(n^{-1}).
\end{align*}
}
Thus, replacing $f$ by $\theta^{a}$, we have
\begin{align*}
\hat{\theta}^a_{\rm PM}
&=\hat{\theta}^a +
\frac{\hat{g}^{bc}}{2n}
\left(- \mGamma{bc}{~a}(\hat{\theta}_{\rm MLE})\right)
+\frac{\hat{g}^{ab}}{n}
\left( \partial_b \log \frac{\hat{\pi}}{\hat{\pi}_J} + \frac{T_{b}(\hat{\theta}_{\rm MLE})}{2}
\right)
+\mathrm{o}_p(n^{-1}).
\end{align*}

\end{proof}

\begin{proof}[Proof of Lemma \ref{prop_map}]
Observe the definition of 
the MAP estimate $\hat{\theta}_{\rm MAP}$:
\begin{align*}
    n \partial_a \bar{L}(\hat{\theta}_{\rm MAP}) + \partial_a \log \pi(\hat{\theta}_{\rm MAP}) = 0 \,\,(a=1,\ldots,d),
\end{align*}
where $\bar{L}(\theta)=(1/n)\sum_{t=1}^{n}\log p(y(t)\,;\,\theta)$.
Letting $\delta=\hat{\theta}_{\rm MAP}-\hat{\theta}_{\rm MLE}$,
the Taylor expansion around $\hat{\theta}_{\rm MLE}$ yields, for $a=1,\ldots,d$,
\begin{align*}
    0 &= \partial_a \bar{L}(\hat{\theta}_{\rm MAP}) + \frac{1}{n}\partial_a \log \pi(\hat{\theta}_{\rm MAP})\\
     &= \partial_a \bar{L}(\hat{\theta}_{\rm MLE}) + \delta^b \partial_{ab}\bar{L}(\hat{\theta}_{\rm MLE}) 
    + \frac{1}{n}\partial_a \log \pi (\hat{\theta}_{\rm MLE}) + {\rm O}_p(\|\delta\|^{2}),\\
     &= \delta^b \partial_{ab}\bar{L}(\hat{\theta}_{\rm MLE}) 
    + \frac{1}{n}\partial_a \log \pi(\hat{\theta}_{\rm MLE}) + {\rm O}_p(\|\delta\|^{2}),
\end{align*}
where the last equation follows since $\partial_{a}\bar{L}(\hat{\theta}_{\rm MLE})=0$ for $a=1,\ldots,d$. 
Because the law of large numbers and the central limit theorem give 
\[
\partial_{ab}\bar{L}(\hat{\theta}_{\rm MLE}) = -g_{ab}(\hat{\theta}_{\rm MLE}) + {\rm o}_p(1)
\quad\text{and}\quad
\partial_{ab}\bar{L}(\hat{\theta}_{\rm MLE}) + g_{ab}(\hat{\theta}_{\rm MLE}) = {\rm O}_p(1/\sqrt{n}),
\]
we get
\begin{align*}
         \delta^b g_{ab}(\hat{\theta}_{\rm MLE})
         +{\rm O}_{p}(\|\delta\| /\sqrt{n})
         = \frac{1}{n}\partial_a \log\pi(\hat{\theta}_{\rm MLE}) + {\rm O}_p(\|\delta\|^{2}),\\
\end{align*}
which yields
\begin{align*}
        \delta^b &= \frac{1}{n} g^{ab} (\hat{\theta}_{\rm MLE})  \partial_a \log \pi(\hat{\theta}_{\rm MLE}) + {\rm o}_p(n^{-1})
        \,\,(b=1,\ldots,d)
\end{align*}
and completes the proof.
\end{proof}

\revise{
\begin{proof}[Proof of Corollary \ref{cor_calibration}]
Using (\ref{eq:bartlett}) and the definition of 0-parallel prior,
\begin{align*}
    g^{ab}g^{cd}\mathrm{E}_{\theta}[\partial_{bcd}\bar{L}] 
&=-g^{ab}g^{cd}\partial_b g_{cd} (\theta)-g^{ab}g^{cd}\mGamma{cdb}{}(\theta)+g^{ab}g^{cd}T_{bcd}(\theta)\\
&= -2g^{ab}\partial_b \log\pi_\mathrm{J} (\theta)-g^{cd}\mGamma{cd}{a}(\theta)+g^{ab}T_{b}(\theta).
\end{align*}
From (\ref{eq:post}) and (\ref{eq:map}), we have for $a=1,\ldots,d$,
\begin{align*} 
&\hat{\theta}_{\rm PM}^a - \hat{\theta}_{\rm MAP}^a \\
&=
\frac{{g}^{ab}(\hat{\theta}_{\rm MLE})}{n}
\left(\partial_b\log\frac{\pi_{\rm PM}}{{\pi}_{\rm J}}(\hat{\theta}_{\rm MLE})
+\frac{T_b(\hat{\theta}_{\rm MLE})}{2}\right)
+\frac{{g}^{bc}(\hat{\theta}_{\rm MLE})}{2n}\left(-\mGamma{bc}{~a}(\hat{\theta}_{\rm MLE})\right) \\
&~~~ -\frac{{g}^{ab}(\hat{\theta}_{\rm MLE})}{n}
\partial_b\log{\pi_{\rm MAP}}(\hat{\theta}_{\rm MLE})
+\mathrm{o}_p(n^{-1})\\
&= \frac{{g}^{ab}(\hat{\theta}_{\rm MLE})}{n}\partial_b\log\frac{\pi_{\rm PM}}{{\pi}_{\rm MAP}}(\hat{\theta}_{\rm MLE}) + \frac{1}{2n}g^{ab}(\hat{\theta}_{\rm MLE})g^{cd}(\hat{\theta}_{\rm MLE})\mathrm{E}_{\theta}[\partial_{bcd}\bar{L}(\hat{\theta}_{\rm MLE})]+\mathrm{o}_p(n^{-1})\\
&= \frac{{g}^{ab}(\hat{\theta}_{\rm MAP})}{n}\partial_b\log\frac{\pi_{\rm PM}}{{\pi}_{\rm MAP}}(\hat{\theta}_{\rm MAP}) + \frac{1}{2n}g^{ab}(\hat{\theta}_{\rm MAP})g^{cd}(\hat{\theta}_{\rm MAP})\partial_{bcd}\bar{L}(\hat{\theta}_{\rm MAP})+\mathrm{o}_p(n^{-1}).\\
\end{align*}
In the last identity, we used $\partial_{bcd}\bar{L}(\hat{\theta}_{\rm MLE}) = \mathrm{E}_{\theta}[\partial_{bcd}\bar{L}(\hat{\theta}_{\rm MLE})]+\mathrm{o}_p(1)$ and $\hat{\theta}_{\rm MLE}=\hat{\theta}_{\rm MAP}+\mathrm{o}_p(1)$.
When $\pi_{\rm PM} = {\pi}_{\rm MAP}$, we have
\begin{align*}
    \hat{\theta}_{\rm PM}^a - \hat{\theta}_{\rm MAP}^a
&= \frac{1}{2n}g^{ab}(\hat{\theta}_{\rm MAP})g^{cd}(\hat{\theta}_{\rm MAP})\partial_{bcd}\bar{L}(\hat{\theta}_{\rm MAP})+\mathrm{o}_p(n^{-1})\\
&= \frac{1}{2n^2}g^{ab}(\hat{\theta}_{\rm{MAP}})g^{cd}(\hat{\theta}_{\rm{MAP}})
\sum_{t=1}^{n}\partial_{bcd}\log p(y(t)\,;\,\hat{\theta}_{\rm{MAP}})+\mathrm{o}_p(n^{-1}).
\end{align*}
\end{proof}
}

\begin{proof}[Proof of Proposition \ref{prop_moment_general}]

This proposition is proved simply by changing Lemmas \ref{prop_post}-\ref{prop_map} to the following lemmas.

\begin{lem}
\label{prop_post_general}
For $i=1,\ldots,d$,
the posterior mean of $f_{i}(\theta)$ based on a prior $\pi(\theta)$ is expanded as
\begin{align*}
\label{eq:post}
(f_\pi)_i
=&
f_i(\hat{\theta}_{\rm MLE})
+\frac{{g}^{ab}(\hat{\theta}_{\rm MLE})}{n}
\left(\partial_b\log\frac{\pi}{{\pi}_J}(\hat{\theta}_{\rm MLE})+\frac{T_b(\hat{\theta}_{\rm MLE})}{2}\right)\partial_a f_i(\hat{\theta}_{\rm MLE})\\
&+\frac{{g}^{bc}(\hat{\theta}_{\rm MLE})}{2n}\left(\partial_b\partial_c f_i(\hat{\theta}_{\rm MLE})-\mGamma{bc}{~a}(\hat{\theta}_{\rm MLE})\partial_a f_i(\hat{\theta}_{\rm MLE})\right)
+\mathrm{o}_p(n^{-1}).
\end{align*}
\end{lem}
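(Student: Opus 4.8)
The plan is to observe that the desired expansion is already contained in the proof of Lemma \ref{prop_post}: that argument is carried out for an arbitrary third-times differentiable function $f:\Theta\to\mathbb{R}$, and Lemma \ref{prop_post} is recovered only at the very end by specializing to $f(\theta)=\theta^{a}$. Accordingly, I would repeat the same two-step Laplace argument with $f$ replaced by each component $f_i$, and simply retain the second-derivative term $\partial_{b}\partial_{c}f_i$ that vanished in the linear case $f(\theta)=\theta^{a}$.

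Concretely, first I would write the posterior expectation as the ratio
\[
(f_\pi)_i=\frac{\int f_i(\theta)\,\pi(\theta)\exp(n\bar{L}(\theta))\,d\theta}{\int \pi(\theta)\exp(n\bar{L}(\theta))\,d\theta},
\]
and apply the Laplace method after rescaling $\theta=\hat{\theta}_{\rm MLE}+\phi/\sqrt{n}$. Treating $f_i(\theta)\pi(\theta)$ as the modified amplitude in the numerator, the expansion of $\pi(\theta)\exp(n\bar{L}(\theta))$ together with the Gaussian moment formulas yields exactly equation (\ref{eq: Laplace approximation of posterior expectation}) with $f=f_i$, namely
\[
(f_\pi)_i=\hat{f}_i+\frac{\hat{J}^{ab}}{2n}\Bigl(\partial_{ab}\hat{f}_i+\frac{2(\partial_a\hat{f}_i)(\partial_b\hat{\pi})}{\hat{\pi}}\Bigr)+\frac{\hat{J}^{ab}\hat{J}^{cd}\partial_{bcd}\hat{L}}{2n}\,\partial_a\hat{f}_i+\mathrm{O}_p(n^{-2}),
\]
where $\hat{f}_i:=f_i(\hat{\theta}_{\rm MLE})$ and $\hat{J}_{ab}:=-\partial_{ab}\bar{L}(\hat{\theta}_{\rm MLE})$.

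Second, I would rearrange this into information-geometric form exactly as in Step 2 of the proof of Lemma \ref{prop_post}: replace $\hat{J}_{ab}$ by $\hat{g}_{ab}+\mathrm{o}_p(1)$ and $\partial_{bcd}\hat{L}$ by $\mathrm{E}_\theta[\partial_{bcd}\bar{L}]+\mathrm{o}_p(1)$, substitute the Bartlett identity (\ref{eq:bartlett}), and use $g^{cd}\partial_b g_{cd}=2\partial_b\log\pi_{\rm J}$. Collecting terms then reproduces precisely the stated expansion, with the new feature that the $\partial_b\partial_c f_i$ contribution—absent for $f(\theta)=\theta^{a}$—now survives inside the $\hat{g}^{bc}/(2n)$ coefficient alongside the $-\mGamma{bc}{~a}\partial_a f_i$ term.

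There is essentially no new obstacle beyond the bookkeeping of this extra second-derivative term. The only genuine technical point is the remainder control: since the numerator now carries the factor $f_i$, I must invoke the third-order differentiability of $f_i$ assumed in the statement to guarantee that the Laplace remainder is $\mathrm{o}_p(n^{-1})$, the third derivatives of $f_i$ entering the $\mathrm{O}_p(n^{-2})$ error in the same way that third derivatives of $\bar{L}$ do in Lemma \ref{prop_post}. With that regularity in hand, the computation is identical to the one already displayed there, which completes the proof.
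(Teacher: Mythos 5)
Your proposal is correct and coincides with the paper's route: the paper's proof of Lemma \ref{prop_post} is already carried out for an arbitrary three-times differentiable $f$, culminating in the general expansion (\ref{eq: Laplace approximation of posterior expectation}) and its information-geometric rearrangement, and Lemma \ref{prop_post_general} is precisely that general display with $f=f_i$ before the specialization $f(\theta)=\theta^{a}$ --- which is why the paper declares the proof ``straightforward'' and omits it. Your one substantive addition, noting that the surviving $\partial_b\partial_c f_i$ term and the remainder control require the assumed third-order differentiability of $f_i$, is exactly the right bookkeeping.
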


\begin{lem}
\label{prop_map_general}
For $i=1,\ldots,d$,
a plugin of the MAP estimate $\hat{\theta}_{\rm MAP}$ of $\theta$ based on a prior $\pi(\theta)$ into a statistic $f_{i}(\theta)$ is expanded as
\begin{align*}
f_i(\hat{\theta}_{\rm MAP})
=
f_i(\hat{\theta}_{\rm MLE})
+\frac{{g}^{ab}(\hat{\theta}_{\rm MLE})}{n}
\partial_b\log{\pi}(\hat{\theta}_{\rm MLE})\partial_a  f_i(\hat{\theta}_{\rm MLE})
+\mathrm{o}_p(n^{-1}).
\end{align*}
\end{lem}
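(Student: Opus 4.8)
The plan is to reduce Lemma \ref{prop_map_general} to the already-established scalar MAP expansion (Lemma \ref{prop_map}) by a single Taylor expansion of $f_i$ about the MLE. First I would record that Lemma \ref{prop_map} gives the componentwise displacement
\begin{align*}
\delta^a := \hat{\theta}_{\rm MAP}^a - \hat{\theta}_{\rm MLE}^a = \frac{g^{ab}(\hat{\theta}_{\rm MLE})}{n}\partial_b\log\pi(\hat{\theta}_{\rm MLE}) + \mathrm{o}_p(n^{-1}).
\end{align*}
In particular, since the prior is fixed and the MLE is consistent, both $\partial_b\log\pi(\hat{\theta}_{\rm MLE})$ and $g^{ab}(\hat{\theta}_{\rm MLE})$ are $\mathrm{O}_p(1)$, so I obtain the order bound $\delta^a = \mathrm{O}_p(n^{-1})$ and hence $\|\delta\|^2 = \mathrm{O}_p(n^{-2}) = \mathrm{o}_p(n^{-1})$.

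Second, I would Taylor-expand $f_i$ about $\hat{\theta}_{\rm MLE}$ to first order; here the hypothesis that $f_i$ is third-times differentiable (twice would already suffice) supplies the required smoothness:
\begin{align*}
f_i(\hat{\theta}_{\rm MAP}) = f_i(\hat{\theta}_{\rm MLE}) + \partial_a f_i(\hat{\theta}_{\rm MLE})\,\delta^a + \mathrm{O}_p(\|\delta\|^2).
\end{align*}
Since the gradient $\partial_a f_i(\hat{\theta}_{\rm MLE})$ is $\mathrm{O}_p(1)$ by continuity and consistency, I would substitute the displacement from the first step, absorb the quadratic remainder $\mathrm{O}_p(\|\delta\|^2)=\mathrm{o}_p(n^{-1})$, and note that the product of the $\mathrm{O}_p(1)$ gradient with the $\mathrm{o}_p(n^{-1})$ tail of $\delta^a$ is again $\mathrm{o}_p(n^{-1})$, to arrive at
\begin{align*}
f_i(\hat{\theta}_{\rm MAP}) = f_i(\hat{\theta}_{\rm MLE}) + \frac{g^{ab}(\hat{\theta}_{\rm MLE})}{n}\partial_b\log\pi(\hat{\theta}_{\rm MLE})\,\partial_a f_i(\hat{\theta}_{\rm MLE}) + \mathrm{o}_p(n^{-1}),
\end{align*}
which is exactly the claimed expansion.

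Because the argument is essentially a one-line composition of a known expansion (Lemma \ref{prop_map}) with a Taylor remainder, I do not anticipate a serious obstacle; the only points demanding care are the stochastic-order bookkeeping. Concretely, I would verify that the second-order Taylor term is genuinely $\mathrm{O}_p(\|\delta\|^2)=\mathrm{O}_p(n^{-2})$, which uses boundedness of $\partial_a\partial_b f_i$ in a shrinking neighborhood of the true parameter (guaranteed by the differentiability of $f_i$ together with the standing regularity conditions), and that every product and remainder is consolidated correctly into a single $\mathrm{o}_p(n^{-1})$ term. These checks are routine, so the expansion follows directly.
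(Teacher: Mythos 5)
Your proposal is correct and matches the route the paper intends: the paper omits this proof as ``straightforward,'' and the straightforward argument is precisely yours --- compose Lemma \ref{prop_map} with a first-order Taylor expansion of $f_i$ about $\hat{\theta}_{\rm MLE}$, noting $\delta^a=\mathrm{O}_p(n^{-1})$ so the quadratic remainder is $\mathrm{O}_p(n^{-2})=\mathrm{o}_p(n^{-1})$. Your stochastic-order bookkeeping (including the observation that twice-differentiability of $f_i$ already suffices here) is sound, so there is nothing to add.
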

The proofs of these lemmas are straightforward and omitted.

\end{proof}

\section{Acknowledgements}

\revise{The authors thank the Editor, the handling editor, and two referees for their constructive comments that have improved the quality of this paper.
The authors thank Kaoru Irie for helpful comments to the early version of this work.
The authors thank Ryoya Kaneko for sharing his python codes of data preprocessing.
The authors thank Takemi Yanagimoto for helpful discussions.}
This work is supported by JSPS KAKENHI (JP19K20222, JP20K23316, JP21H05205, JP21K12067, JP22H00510, JP23K11024), MEXT (JPJ010217), and ``Strategic Research Projects'' grant (2022-SRP-13) from ROIS (Research Organization of Information and Systems).

\revise{
Some of the python codes used in this paper are available at \url{https://doi.org/10.5281/zenodo.13854194}.}

\section*{Supplement}

\revise{
This supplement provides regularity conditions used in this paper and the validity of Corollary \ref{cor_calibration}.}

\section*{Regularity Conditions}
\revise{
Let $\pi({\theta}\mid y^n)$ denote the posterior density of $\theta$ given observation $y^n$.
We assume the following regularity conditions in \cite{hartigan1998}:
\begin{itemize}
    \item[A1.] All derivatives of $\log p(y;\theta)$ up to the fifth order exist for all $y$ and ${\theta}$ in a neighborhood of the true parameter ${\theta}_0$.
    \item[A2.] All moments exist for the first four derivatives, as well as for the maximum squared fifth derivatives in a neighborhood of ${\theta}_0$. Additionally, the moments are differentiable in a neighborhood of ${\theta}_0$.
    \item[A3.] The information matrix whose $(a,b)$ component is $\mathrm{E}_{\theta_0}[\partial_a \log p(y^n;\theta) \partial_b \log p(y^n;\theta)]$ is positive definite.
    \item[A4.] $\mathrm{E}_{\theta_0}[\partial_{a_1}\partial_{a_2}\dots \partial_{a_r}  p(y;\theta)/p(y;\theta)]_{\theta=\theta_0} = 0, \text{ for } r=1,2,\dots,4$.
    \item[A5.] The prior density $\pi$ is positive and possesses two derivatives in a neighborhood of ${\theta}_0$.
    \item[A6.] 
    For all $\epsilon > 0$,
    $\mathrm{P}[\|\hat{{\theta}}_\mathrm{MLE} - {\theta}_0\| > \epsilon] = \mathrm{o}(n^{-2})$.
    \item[A7.] 
    There exists $t_n$ such that
    $\int_{\|{\theta} - {\theta}_0\| > \epsilon} \sup_{y} p({y};{\theta}) \pi({\theta}\mid y^n) d{\theta} = t_n/n^2$,
    where $t_n$ is a random variable that satisfies $\mathrm{P}(|t_n| > \epsilon) = \mathrm{o}(n^{-2})$
    for all $\epsilon > 0$.
    \item[A8.] For all $\epsilon > 0$, there exists $\delta > 0$ such that 
    \[ 
    \int p(y; \theta) \log \left(\frac{p(y; \theta)}{p(y; \theta_0)}\right) dy < \delta
 \]
    implies $ \|{\theta} - {\theta}_0 \| < \epsilon$.
\end{itemize}
}
\revise{
The assumptions A1-A5 are related to the behavior of the likelihood in the neighborhood of $\theta_0$.
The assumption A7 specifies that we only need parameter values near $\theta_0$ when calculating posterior truncated moments and the posterior density of a new observation.
For further details of the assumptions, see \cite{hartigan1998}.
}

\section*{Validation of the calibration formula in Corollary \ref{cor_calibration}}

\revise{
This section checks the validity of the calibration formula in Corollary \ref{cor_calibration}.
We employ the following $d$-variate Cauchy model with a Gaussian prior:
\begin{align*}
    Y(t) \mid \mu &\sim \text{MultivariateCauchy}(\mu,I)\quad (t=1,\ldots,n),\\
    \mu &\sim \text{Normal}(0,100 I),
\end{align*}
where the $d$-variate Cauchy density with location $\mu$ and scale matrix $I$ is given by
\[
p(y \,;\, \mu, I)
=\frac{1}{\pi^{d}}\frac{\Gamma((d+1)/2)}{\Gamma(1/2)}\frac{1}{(1+\|y-\mu\|^{2})^{(d+1)/2}}
\]
with the Gamma function $\Gamma(k)$.
The tail-heaviness slows down the convergence of MCMC algorithms especially in high dimension (c.f., \citealp{doucetal2004,Kamatani2020});
in such a case,
the calibration of the posterior mean from the MAP estimate can offer an anchor of the convergence. We employ the random-walk Metropolis--Hastings algorithms based on the multivariate Cauchy distribution with the step size determined by $0.1/\sqrt{d}$.}

\begin{figure}[h]
\label{fig S: Cauchy}
\begin{center}
\includegraphics[width=120mm]{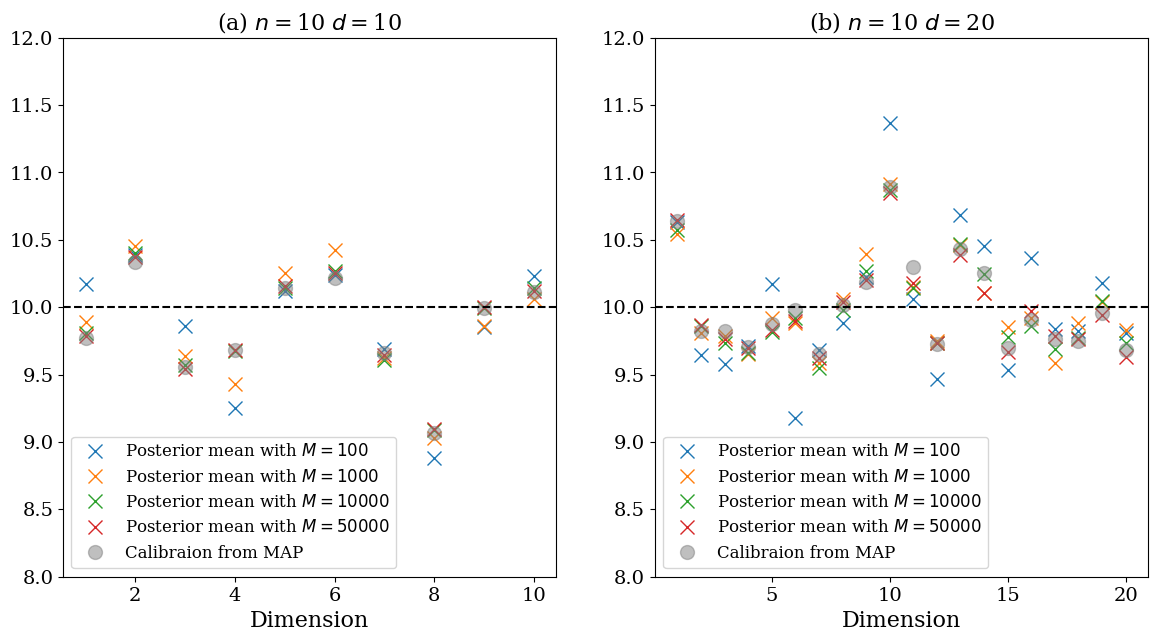}
\caption{
\revise{The calibration from the MAP estimate compared to the posterior means with the sample size $M$ of MCMC under the $d$-variate Cauchy model with the Gaussian prior. (a) the sample size $n$ is equal to 10 and the number $d$ of the dimensions is 10; (b) the sample size $n$ is equal to 10 and the number $d$ of the dimensions is 20.}
}
\end{center}
\end{figure}

\revise{
Figure 6 displays the estimate of the posterior mean using the calibration from the MAP estimate,
in comparison to posterior means based on the MCMC samples. The calibration from the MAP estimate successfully matches the posterior mean with the MCMC sample $M=50000$, and suggests that more than 10000 MCMC samples are needed for accurate computation of the posterior mean. In conclusion, this experiment demonstrates the potential application of our findings in a general setting.}



\bibliographystyle{plainnat}
\bibliography{MAPPM}

\newcommand{\noop}[1]{}
\begin{thebibliography}{37}
\providecommand{\natexlab}[1]{#1}
\providecommand{\url}[1]{\texttt{#1}}
\expandafter\ifx\csname urlstyle\endcsname\relax
  \providecommand{\doi}[1]{doi: #1}\else
  \providecommand{\doi}{doi: \begingroup \urlstyle{rm}\Url}\fi

\bibitem[Amari(1985)]{amari1985}
S.~Amari.
\newblock \emph{Differential-Geometrical Methods in Statistics}.
\newblock Springer-Verlag, New York, 1985.

\bibitem[Baydin et~al.(2018)Baydin, Pearlmutter, Radul, and Siskind]{baydinetal2018}
A.~Baydin, B.~Pearlmutter, A.~Radul, and J.~Siskind.
\newblock Automatic differentiation in machine learning: a survey.
\newblock \emph{Journal of Machine Learning Research}, 18\penalty0 (153):\penalty0 1--43, 2018.

\bibitem[Burger and Lucka(2014)]{burger2014}
M.~Burger and F.~Lucka.
\newblock Maximum a posteriori estimates in linear inverse problems with log-concave priors are proper {B}ayes estimators.
\newblock \emph{Inverse Problems}, 30, 2014.

\bibitem[Datta and Dunson(2016)]{datta2016}
J.~Datta and D.~Dunson.
\newblock Bayesian inference on quasi-sparse count data.
\newblock \emph{Biometrika}, 103:\penalty0 971--983, 2016.

\bibitem[Douc et~al.(2004)Douc, Fort, Moulines, and Soulier]{doucetal2004}
R.~Douc, G.~Fort, E.~Moulines, and P.~Soulier.
\newblock Practical drift conditions for subgeometric rates of convergence.
\newblock \emph{Annals of Applied Probability}, 14:\penalty0 1353--1377, 2004.

\bibitem[Druilhet and Marin(2007)]{druilhetmarin2007}
P.~Druilhet and J.~Marin.
\newblock Invariant {HPD} credible sets and {MAP} estimators.
\newblock \emph{Bayesian Analysis}, 2:\penalty0 681--691, 2007.

\bibitem[Dua and Graff(2017)]{UCI}
D.~Dua and C.~Graff.
\newblock {UCI} machine learning repository, 2017.
\newblock http://archive.ics.uci.edu/ml.

\bibitem[Ghosh and Liu(2011)]{ghosh2011}
M.~Ghosh and R.~Liu.
\newblock Moment matching priors.
\newblock \emph{Sankhy\=a : The Indian Journal of Statistics}, 73:\penalty0 185--201, 2011.

\bibitem[Gribonval(2011)]{gribonval2011}
R.~Gribonval.
\newblock Should penalized least squares regression be interpreted as {M}aximum {A} {P}osteriori estimation?
\newblock \emph{IEEE Transaction on Signal Processing}, 59:\penalty0 2405--2410, 2011.

\bibitem[Gribonval and Machart(2013)]{gribonval2013}
R.~Gribonval and P.~Machart.
\newblock Reconciling ``priors" \& ''priors" without prejudice?
\newblock \emph{Proceedings of Advances in Neural Information Processing Systems 26 (NIPS 2013)}, 2013.

\bibitem[Hamura et~al.(2022)Hamura, Irie, and Sugasawa]{hamuraetal2022}
H.~Hamura, K.~Irie, and S.~Sugasawa.
\newblock On global-local shrinkage priors for count data.
\newblock \emph{Bayesian Analysis}, 17:\penalty0 545--564, 2022.

\bibitem[Hartigan(1998)]{hartigan1998}
J.~Hartigan.
\newblock The maximum likelihood prior.
\newblock \emph{The Annals of Statistics}, 26:\penalty0 2083--2103, 1998.

\bibitem[Hashimoto(2019)]{hashimoto2019}
S.~Hashimoto.
\newblock Moment matching priors for non-regular models.
\newblock \emph{Journal of Statistical Planning and Inference}, 203:\penalty0 169--177, 2019.

\bibitem[Iri(1984)]{Iri1984}
M.~Iri.
\newblock Simultaneous computation of functions, partial derivatives and estimates of rounding errors – complexity and practicality.
\newblock \emph{Japan Joumal of Applied Mathematics}, 1:\penalty0 223--252, 1984.

\bibitem[Kamatani(2020)]{Kamatani2020}
K.~Kamatani.
\newblock Random walk metropolis algorithm in high dimension with non-gaussian target distributions.
\newblock \emph{Stochastic Processes and their Applications}, 130:\penalty0 297--327, 2020.

\bibitem[Kasprzaki et~al.(2023)Kasprzaki, Giordano, and Broderick]{Kasprzaki2023}
M.~Kasprzaki, R.~Giordano, and T.~Broderick.
\newblock How good is your {L}aplace approximation of the {B}ayesian posterior? finite-sample computable error bounds for a variety of useful divergences, 2023.
\newblock arXiv:2209.14992.

\bibitem[Kass and Vos(1997)]{kass1997}
R.~E. Kass and P.~W. Vos.
\newblock \emph{Geometrical Foundations of Asymptotic Inference}.
\newblock Wiley, New York, 1997.

\bibitem[Komaki(1996)]{komaki1996}
F.~Komaki.
\newblock On asymptotic properties of predictive distributions.
\newblock \emph{Biometrika}, 83:\penalty0 299--313, 1996.

\bibitem[Komaki(2004)]{komaki2004}
F.~Komaki.
\newblock Simultaneous prediction of independent {P}oisson observables.
\newblock \emph{Annals of Statistics}, 32:\penalty0 1744--1769, 2004.

\bibitem[Komaki(2006)]{komaki2006}
F.~Komaki.
\newblock Shrinkage priors for {B}ayesian prediction.
\newblock \emph{{Annals of Statistics}}, 34:\penalty0 808--819, 2006.

\bibitem[Lauritzen(1987)]{lauritzen1987}
S.~Lauritzen.
\newblock \emph{Statistical manifolds}, chapter~4.
\newblock Institute of Mathematical Statistic, 1987.

\bibitem[Liu et~al.(2014)Liu, Chakrabarti, Samanta, Ghosh, and Ghosh]{liu2014}
R.~Liu, A.~Chakrabarti, T.~Samanta, J.~Ghosh, and M.~Ghosh.
\newblock On divergence measures leading to {J}effreys and other reference priors.
\newblock \emph{Bayesian Analysis}, 9:\penalty0 331--370, 2014.

\bibitem[Louchet and Moisan(2013)]{louchet2013}
C.~Louchet and L.~Moisan.
\newblock Posterior expectation of the total variation model: Properties and experiments.
\newblock \emph{SIAM JOurnal of Imaging Science}, 6:\penalty0 2640--2684, 2013.

\bibitem[Matsuda(1976)]{matsudaiwanami}
M.~Matsuda.
\newblock \emph{Theory of exterior differential forms}.
\newblock Iwanami shoten, 1976.
\newblock in Japanese.

\bibitem[Miyata(2004)]{miyata2004}
Y.~Miyata.
\newblock Fully exponential laplace approximations using asymptotic modes.
\newblock \emph{Journal of the American Statistical Association}, 99:\penalty0 1037--1049, 2004.

\bibitem[Okudo and Komaki(2021)]{okudo2021}
M.~Okudo and F.~Komaki.
\newblock Bayes extended estimators for curved exponential families.
\newblock \emph{IEEE Transactions on Information Theory}, 67:\penalty0 1088--1098, 2021.

\bibitem[Pananos and Lizotte(2020)]{pananos2020}
A.~Pananos and D.~Lizotte.
\newblock Comparisons between {H}amiltonian {M}onte {C}arlo and {M}aximum {A} {P}osteriori for a {B}ayesian model for {A}pixaban induction dose \& dose personalization.
\newblock \emph{Proceedings of Machine Learning Research}, 126:\penalty0 1--20, 2020.

\bibitem[Panov and Spokoiny(2015)]{panov2015}
M.~Panov and V.~Spokoiny.
\newblock Finite sample {B}ernstein--von {M}ises theorem for semiparametric problems.
\newblock \emph{Bayesian Analysis}, 10:\penalty0 665--710, 2015.

\bibitem[Polson and Scott(2016)]{polson2016}
N.~Polson and J.~Scott.
\newblock Mixtures, envelopes and hierarchical duality.
\newblock \emph{Journal of the Royal Statistical Society Series B: Statistical Methodology}, 78\penalty0 (4):\penalty0 701--727, 2016.

\bibitem[Takeuchi and Amari(2005)]{takeuchi2005}
J.~Takeuchi and S.~Amari.
\newblock $\alpha$-parallel prior and its properties.
\newblock \emph{IEEE transactions on information theory}, 51\penalty0 (3):\penalty0 1011--1023, 2005.

\bibitem[Tanaka(2023)]{tanaka2023}
F.~Tanaka.
\newblock Geometric properties of noninformative priors based on the chi-square divergence.
\newblock \emph{Frontiers in Applied Mathematics and Statistics}, 9, 2023.

\bibitem[Tierney and Kadane(1986)]{tierney1986}
L.~Tierney and J.~B. Kadane.
\newblock Accurate approximations for posterior moments and marginal densities.
\newblock \emph{{Journal of the American Statistical Association}}, 81:\penalty0 82--86, 1986.

\bibitem[{Tokyo~Metropolitan~Police~Department}(2023)]{pickpocket}
{Tokyo~Metropolitan~Police~Department}.
\newblock The number of crimes in tokyo prefecture by town and type, 2023.
\newblock https://www.keishicho.metro.tokyo.lg.jp/about\_mpd/jokyo\_tokei/jokyo/.

\bibitem[Yanagimoto and Miyata(2023)]{yanagimoto2023}
T.~Yanagimoto and Y.~Miyata.
\newblock A pair of novel priors for improving and extending the conditional {MLE}.
\newblock \emph{Journal of Statistical Planning and Inference}, in press:\penalty0 106--117, 2023.

\bibitem[Yano and Kato(2020)]{yano2020}
K.~Yano and K.~Kato.
\newblock On frequentist coverage errors of {B}ayesian credible sets in moderately high dimensions.
\newblock \emph{Bernoulli}, 26:\penalty0 616--641, 2020.

\bibitem[Yano et~al.(2021)Yano, Kaneko, and Komaki]{yanokanekokomaki2021}
K.~Yano, R.~Kaneko, and F.~Komaki.
\newblock Minimax predictive density for sparse count data.
\newblock \emph{Bernoulli}, 27:\penalty0 1212--1238, 2021.

\bibitem[Zwillinger(1997)]{handbookde}
D.~Zwillinger.
\newblock \emph{Handbook of Differential Equations}.
\newblock Academic Press, third edition edition, 1997.

\end{thebibliography}

\end{document}